\pdfoutput=1
\RequirePackage{ifpdf}
\ifpdf 
\documentclass[pdftex]{sigma}
\else
\documentclass{sigma}
\fi

\numberwithin{equation}{section}



\usepackage{csquotes} 
\usepackage{mathtools} 
\usepackage{stmaryrd} 
\usepackage{physics} 
\usepackage{tensor} 
\usepackage{slashed} 
\usepackage{faktor} 
\usepackage{mathrsfs} 
\usepackage{bbm} 
\usepackage{tikz} 
\usetikzlibrary{cd}
\usepackage{booktabs} 
\usepackage{colortbl} 
\usepackage{array} 
\usepackage{makecell} 
\usepackage{multirow} 
\usepackage{adjustbox} 
\usepackage{enumitem} 
\setenumerate[1]{label={(\arabic*)}}
%
%


\newtheorem{Theorem}{Theorem}[section]
\newtheorem*{Theorem*}{Theorem}
\newtheorem{Corollary}[Theorem]{Corollary}
\newtheorem{Lemma}[Theorem]{Lemma}
\newtheorem{Proposition}[Theorem]{Proposition}
 { \theoremstyle{definition}
\newtheorem{Definition}[Theorem]{Definition}

 }
%
\newcommand{\fa}{\mathfrak{a}}

\newcommand{\fh}{\mathfrak{h}}

\newcommand{\fs}{\mathfrak{s}}

\newcommand{\fso}{\mathfrak{so}}

\newcommand{\fX}{\mathfrak{X}}
\newcommand{\fK}{\mathfrak{K}}

\newcommand{\fS}{\mathfrak{S}}
\newcommand{\fV}{\mathfrak{V}}
\newcommand{\fD}{\mathfrak{D}}

\newcommand{\RR}{\mathbb{R}}
\newcommand{\CC}{\mathbb{C}}
\newcommand{\HH}{\mathbb{H}}
\newcommand{\ZZ}{\mathbb{Z}}

\newcommand{\bbS}{\mathbb{S}}


\newcommand{\eL}{\mathscr{L}}

\newcommand{\cH}{\mathcal{H}}

\newcommand{\ssB}{\mathsf{B}}
\newcommand{\ssC}{\mathsf{C}}
\newcommand{\ssH}{\mathsf{H}}
\newcommand{\ssZ}{\mathsf{Z}}

\newcommand{\ssS}{\mathsf{S}}
\newcommand{\Sbundle}{\underline{S}}




\newcommand{\fstilde}{\widetilde{\fs}}


\newcommand{\Wedge}{\mathchoice{{\textstyle\bigwedge}}{\bigwedge}{\bigwedge}{\bigwedge}}	
\newcommand{\Odot}{\mathchoice{{\textstyle\bigodot}}{\bigodot}{\bigodot}{\bigodot}}
\newcommand{\Otimes}{\mathchoice{{\textstyle\bigotimes}}{\bigotimes}{\bigotimes}{\bigotimes}}


\newcommand{\be}{\boldsymbol{e}}

\newcommand{\1}{\mathbbm{1}}

\newcommand{\pair}[2]{\left\langle #1,#2\right\rangle} 
\DeclareMathOperator{\Id}{Id}

\DeclareMathOperator{\End}{End}
\DeclareMathOperator{\Hom}{Hom}

\DeclareMathOperator{\Spin}{Spin}

\DeclareMathOperator{\Cl}{Cl}

\DeclareMathOperator{\vol}{vol}

\begin{document}
\allowdisplaybreaks

\newcommand{\arXivNumber}{2410.01765}

\renewcommand{\PaperNumber}{082}

\FirstPageHeading

\ShortArticleName{Killing Superalgebras in 2 Dimensions}

\ArticleName{Killing Superalgebras in 2 Dimensions}

\Author{Andrew D.K. BECKETT}

\AuthorNameForHeading{A.D.K.~Beckett}

\Address{University of Edinburgh, UK}
\Email{\href{mailto:adkbeckett@proton.me}{adkbeckett@proton.me}}
\URLaddress{\url{https://adkbeckett.github.io/}}

\ArticleDates{Received November 28, 2024, in final form September 15, 2025; Published online September 30, 2025}

\Abstract{We provide some examples of Killing superalgebras on 2-dimensional pseudo-Riemannian manifolds within the theoretical framework established in [\textit{SIGMA} \textbf{21} (2025), 081, 61~pages]. We compute the Spencer cohomology group $\mathsf{H}^{2,2}(\mathfrak{s}_-;\mathfrak{s})$ and filtered deformations of the non-chiral flat model (Euclidean and Poincar\'e) superalgebra $\mathfrak{s}$ for various Dirac currents and show these arise as Killing superalgebras for (imaginary) geometric Killing and skew-Killing spinors in both Riemannian and Lorentzian signature.}

\Keywords{Lie superalgebra; Killing superalgebra; Killing spinor; connection; superconnection; isometry; Spencer cohomology; filtration; deformation; homogeneous}

\Classification{17B66; 17B56}


\section{Introduction}

In recent work \cite{Beckett2024_2}, we discussed a generalisation of the concept of Killing superalgebras, which arise as supersymmetry algebras of supersymmetric solutions in supergravity theories \cite{Figueroa-OFarrill2007_1}, to a~more general context. To this end, we introduced the concept of an \emph{admissible connection} $D$ on a bundle of spinors $\Sbundle$ over a pseudo-Riemannian manifold $M$ of arbitrary signature equipped with a Dirac current $\kappa\colon \Odot^2 \Sbundle\to TM$ or $\kappa\colon \Wedge^2 \Sbundle\to TM$. The Killing (super)algebra associated to this data was then defined as the vector space $\fK_D=\fV_D\oplus\fS_D$, where $\fS_D$ is the space of $D$-parallel spinor fields (known here as \emph{Killing spinors}) and $\fV_D$ is the space of Killing vectors preserving the connection equipped with a bracket defined using the Dirac current $\kappa$ and the (vectorial and spinorial) Lie derivative \cite[Definition~3.6]{Beckett2024_2}.

We discussed the algebraic structure of these Killing superalgebras, showing \cite[Theorem~3.10]{Beckett2024_2} that they were filtered subdeformations of \emph{flat model} (super)algebras which are generalisations of the ($N$-extended) Poincar\'e superalgebra first systematically discussed in \cite{Strathdee1986}. The classification of these flat models in general signature is given in \cite{Alekseevsky1997}. We also discussed how the Spencer cohomology of the flat models can be used to study their filtered subdeformations with particular focus on the case of Lorentzian signature and symmetric, causal Dirac current, generalising many results from the previously-studied 11-dimensional supergravity case \cite{Figueroa-OFarrill2017_1} to general dimension and $N$-extension.

While providing a number of theoretical results, our previous work \cite{Beckett2024_2} did not contain any concrete examples. Numerous examples are provided by existing work in Lorentzian signature, primarily in 11 dimensions \cite{Figueroa-OFarrill2016,Figueroa-OFarrill2017_1,Figueroa-OFarrill2017,Santi2022} but also in 4, 5 and 6 dimensions \cite{Beckett2019,deMedeiros2016,deMedeiros2018}, while work in Riemannian signature \cite{Figueroa-OFarrill2008_except} has shown that geometric Killing spinors on higher-dimensional spheres provide geometric realisations of the exceptional algebras $\mathfrak{f}_4$ and $\mathfrak{e}_8$ as well as $\fso(8)$ triality as Killing algebras. However, the literature is lacking in some simpler examples which demonstrate the framework as well as the effect of different choices -- for example, signature and Dirac current -- on the definition of admissible connections and on the associated Killing (super)algebras. In this work, we provide these simple examples by considering admissible connections and Killing (super)algebras over 2-dimensional manifolds, and we also study the Spencer cohomology group $\cH^{2,2}$ and filtered deformations of the relevant flat models.

This work is intended as a companion to \cite{Beckett2024_2} which illustrates its main theoretical ideas and is best read in parallel with that work; we use the same terminology and conventions in both works. We will consider only signatures $(0,2)$ and $(1,1)$ since the Clifford algebra is isomorphic to the real matrix algebra $\RR(2)$ in both cases, hence the situation is much simpler than the $(2,0)$ case where the Clifford algebra is $\HH$. Indeed, the former cases are so similar that we will be able to perform many of the calculations in a signature-agnostic way; we will however choose a sign convention (see equation \eqref{eq:clifford-rel}) which will allow us to work in practice with a positive-definite inner product in signature $(0,2)$. The quaternionic $(2,0)$ case, as well as other generalisations, will be treated in future work.

\section{Constructions, conventions and formulae for spinor modules}

Throughout, we let $V=\RR^{1,1}$ or $\RR^{0,2}$ and note as mentioned in the introduction that in either case $\Cl(V)\cong\RR(2)$. We will define explicit matrix representations of the Clifford algebras in either case below; for now we note that these will allow us to identify the real irreducible pinor module $\bbS$ as $\RR^2$ under left multiplication by matrices.

\subsection{Signature-agnostic Clifford algebra conventions}
\label{sec:gamma-matrix}

As discussed in \cite[Appendix~A.1.2]{Beckett2024_2}, there is a choice of sign convention in the Clifford algebra relation
\begin{equation}\label{eq:clifford-rel}
	v\cdot v = \pm\eta(v,v)\1,
\end{equation}
where $v\in V$; we take the mathematical convention with $-$ in defining $\Cl(p,q)=\Cl(\RR^{p,q})$ but the more common convention in physics with $+$ for explicit calculations, following some previous work on Killing superalgebras. We will speak of signature $(0,2)$ to indicate that we are working with the Clifford algebra $\Cl(0,2)$, thus with the $+$ convention above the inner product $\eta$ must in fact be \emph{positive-definite}. In choosing (ordered) orthonormal bases $\{e_\mu\}$ for $V$, we take the ``\emph{mostly-positive}'' convention $\eta_{00}=-\eta_{11}=-1$ in signature $(1,1)$. We define the sign
$
	\varsigma=\det[\eta]
$
which is $+1$ in signature $(0,2)$ and $-1$ in signature $(1,1)$.

In an orthonormal basis $\{e_\mu\}$, we denote the matrix representing $e_\mu$ under any specified isomorphism $\Cl(V)\to\RR(2)$ by $\Gamma_\mu$ and also define $\Gamma_{\mu\nu}:=\Gamma_{[\mu}\Gamma_{\nu]}$. The matrix representing the canonical volume element $\vol\in\Wedge^2 V$ is $\Gamma_*=\Gamma_{12}$ or $\Gamma_{01}$ depending on signature. Respectively to signature, $\{\1,\Gamma_1,\Gamma_2,\Gamma_*\}$ or $\{\1,\Gamma_0,\Gamma_1,\Gamma_*\}$ is a basis for $\RR(2)$ and we will refer to these as \emph{$\Gamma$-matrices}. The algebraic relations among these matrices are as follows:
\[
	\Gamma_{\mu}\Gamma_{\nu} = \Gamma_{\mu\nu} + \eta_{\mu\nu}\1 = \varepsilon_{\mu\nu}\Gamma_* + \eta_{\mu\nu}\1,
	\qquad	\Gamma_\mu \Gamma_* = -\Gamma_* \Gamma_\mu = \varsigma \varepsilon_{\mu\nu}\Gamma^\nu,
	\qquad (\Gamma_*)^2 = -\varsigma\1,
\]
where $\varepsilon_{\mu\nu}$ is the Levi-Civita symbol defined so that $\varepsilon_{12}=-\varepsilon_{21}=+1$ in definite signature and $\varepsilon_{01}=-\varepsilon_{10}=+1$ in Lorentzian signature. Using the metric to raise and lower indices, we must define the symbols with raised indices so that $\varepsilon^{12}=-\varepsilon^{21}=+1$ and $\varepsilon^{01}=-\varepsilon^{10}=-1$ respectively. We have the following combinatorial identities for $\varepsilon_{\mu\nu}$:
\[
	\varepsilon_{\mu\nu}\varepsilon_{\rho\sigma}
		= \varsigma\qty(\eta_{\mu\rho}\eta_{\nu\sigma} - \eta_{\mu\sigma}\eta_{\nu\rho}),
	\varepsilon_{\mu\nu}\varepsilon\indices{_\rho^\nu}
		= \varsigma\eta_{\mu\rho},\qquad
	\varepsilon_{\mu\nu}\varepsilon^{\mu\nu} = 2\varsigma.
\]
For the avoidance of confusion in calculations, it is useful to denote the top-rank gamma matrix with indices raised as $\Gamma^*=\Gamma^{12}$ or $\Gamma^{01}$ and note the following
$\Gamma^* = \varsigma\Gamma_*$,
$\Gamma^{\mu\nu} = \varepsilon^{\mu\nu}\Gamma_* = \varsigma\varepsilon^{\mu\nu}\Gamma^*$.
We also have the following traces of products
$\Gamma^\mu\Gamma_\mu = 2\1$, $
	\Gamma^\mu\Gamma_\nu\Gamma_\mu = 0$, $
	\Gamma^\mu\Gamma_*\Gamma_\mu = -2\Gamma_*$.
Finally, we note that the even subalgebra $\Cl_{\overline 0}(V)$ corresponds to the span $\RR\{\1,\Gamma_*\}$ and the Lie algebra~${\fso(V)\cong\RR}$ to the span $\RR\Gamma_*$.

\subsection{Pinor and spinor modules, admissible bilinears and Dirac currents}
\label{sec:2d-dirac-currents}

We now describe the (s)pinor modules as well as their admissible bilinears and Dirac current explicitly. We will make extensive use of the Pauli matrices
\[
	\sigma_1 = \begin{pmatrix} 0 & 1 \\ 1 & 0 \end{pmatrix},
	\qquad\sigma_2 = \begin{pmatrix} 0 & -{\rm i} \\ {\rm i} & 0 \end{pmatrix},
	\qquad \sigma_3 = \begin{pmatrix} 1 & 0 \\ 0 & -1 \end{pmatrix}.
\]
Note that $\sigma_1$, $\sigma_3$ are real symmetric matrices and $\Omega = {\rm i}\sigma_2$ is the (real and skew-symmetric) standard symplectic matrix. The sets $\{\1,\sigma_1,\sigma_2,\sigma_3\}$ and $\{\1,\sigma_1,\sigma_3,\Omega={\rm i}\sigma_2\}$ are both $\CC$-bases for~$\CC(2)$, while the latter is also an $\RR$-basis for $\RR(2)$. The Pauli matrices satisfy the algebraic identities
\smash{$\sigma_i^\dagger = \sigma_i$},
$ \sigma_i \sigma_j = \delta_{ij}\1 + {\rm i}\varepsilon_{ijk}\sigma_k$,
where \smash{${}^\dagger$} denotes Hermitian adjoint (conjugate-transpose), $\varepsilon_{ijk}$ is the Levi-Civita symbol with normalisation $\varepsilon_{123}=+1$, and we use the Einstein summation convention on repeated indices. The second identity is also useful to express in the form~${\comm{\sigma_i}{\sigma_j} = 2{\rm i}\varepsilon_{ijk}\sigma_k}$,
$\acomm{\sigma_i}{\sigma_j} = 2\delta_{ij}\1$,
where $\comm{-}{-}$ is the matrix commutator, $\acomm{-}{-}$ is the anti-commutator.

We recall the following from \cite{Alekseevsky1997} where the flat model (super)algebras are classified by classifying the possible Dirac currents, that is $\fso(V)$-equivariant maps $\kappa\colon\Odot^2S\to V$ where $S$ is a~(possibly $N$-extended) spinor module of $\fso(V)$. This classification essentially reduces to classifying such maps on irreducible pinor modules $\ssS$, and the space of such maps is spanned by those Dirac currents $\kappa$ obtained from admissible bilinears $B$ (defined below) on $\ssS$ via the formula
\begin{equation}\label{eq:current-bilinear-corresp}
	\eta(\kappa(\epsilon,\epsilon'),v) = B(\epsilon,v\cdot\epsilon')
\end{equation}
for all $\epsilon,\epsilon'\in\ssS$, $v\in V$.

\begin{Definition}[admissible bilinear]\label{def:admissible-bilinear}
	Let $(V,\eta)$ be a (pseudo-)inner product space and let $\ssS$ be a real irreducible module of $\Cl(V)$ (a real pinor module). A real bilinear form $B$ on $\ssS$ is \emph{admissible} if
	\begin{enumerate}\itemsep=0pt
		\item $B$ is either symmetric or skew-symmetric
	$B(\epsilon,\epsilon') = \varsigma_B B(\epsilon',\epsilon)
$
			for all $\epsilon,\epsilon'\in\ssS$, where $\varsigma_B=\pm 1$ is called the \emph{symmetry} of $B$.
		\item Clifford multiplication by an element $v\in V$ is either $B$-symmetric or $B$-skew-symmetric
$
				B(\epsilon,v\cdot \epsilon') = \tau_B B(v\cdot\epsilon,\epsilon')
$
			for all $\epsilon,\epsilon'\in\ssS$, where $\tau_B=\pm 1$ is called the \emph{type} of $B$.
		\item whenever $\ssS$ is reducible as an $\fso(V)$-module, $\ssS=\ssS_+\oplus \ssS_-$, the submodules $\ssS_\pm$ are either mutually $B$-orthogonal ($B(\ssS_+,\ssS_-)=0$) or $B$-isotropic ($B(\ssS_\pm,\ssS_\pm)=0$); we define the \emph{isotropy} $\iota_B$ of $B$ to be $+1$ in the first case and $-1$ in the second.
	\end{enumerate}
	Furthermore, if $B$ is an admissible bilinear on $\ssS$, the Dirac current $\kappa\colon\Odot^2\ssS\to V$ defined by equation~\eqref{eq:current-bilinear-corresp} satisfies
$
		\kappa(\epsilon,\epsilon') = \varsigma_\kappa\kappa(\epsilon',\epsilon)
$
	for all $\epsilon,\epsilon'\in\ssS$, where $\varsigma_\kappa=\varsigma_B\tau_B=\pm 1$ is called the \emph{symmetry} of $\kappa$.
\end{Definition}

\subsubsection[Signature (1,1)]{Signature $\boldsymbol{(1,1)}$}

Here we choose the representation $\Cl(1,1)\to \RR(2)$ defined by
$\Gamma_0 = {\rm i}\sigma_2 = \Omega$,
$ \Gamma_1 = \sigma_1$,
$ \Gamma_{\mu\nu} = \varepsilon_{\mu\nu}\sigma_3$.
The even subalgebra $\Cl_{\bar 0}(1,1)$ is embedded as the subalgebra of diagonal matrices and is isomorphic as an $\RR$-algebra to $\RR^2$. The pinor module is $\ssS=\RR^2$, which decomposes as a~representation of the even subalgebra as $\ssS=\ssS_+\oplus \ssS_-$, where the irreducible real spinor modules $\ssS_+=\RR\be_1$, $\ssS_-=\RR\be_2$ (where $\be_i$ are the standard basis vectors) are the $\pm 1$ eigenspaces of $\Gamma_{01}=\sigma_3$.

From the classification in \cite{Alekseevsky1997}, we expect to find two independent admissible bilinears, both of which have symmetric Dirac currents with respect to which $\ssS_+$, $\ssS_-$ are mutually orthogonal. It is simple to verify that the bilinear products represented by the matrices $\sigma_1$ and $\Omega=i\sigma_2$ are admissible, with symmetry and isotropy properties given in Table~\ref{table:(1,1)-admissible-bilinears}. Either Dirac current restricts non-trivially to $\ssS_\pm$ (in fact they restrict to the same map $\Odot^2\ssS_\pm\to V$) but neither bilinear does.

\begin{table}[t]
\centering
	\caption{Properties of admissible bilinears and their Dirac currents in signature $(1,1)$.}
	\label{table:(1,1)-admissible-bilinears}
	\setlength{\extrarowheight}{.75ex}
	$\begin{array}{c|ccccc}
		B(\epsilon,\epsilon')	& \varsigma_B & \tau_B	& \iota_B & \varsigma_\kappa & \iota_\kappa	\\
		\hline
		\epsilon^{\mathsf{T}}\sigma_1\epsilon'	& +	& + & - & + & +	\\
		\epsilon^{\mathsf{T}}\Omega\epsilon'	& -	& - & - & + & +
	\end{array}$
\end{table}

\subsubsection[Signature (0,2)]{Signature $\boldsymbol{(0,2)}$}

We define the representation $\Cl_{\bar 0}(0,2)\to \RR(2)$ by
$\Gamma_1 = \sigma_3$,
$\Gamma_2 = \sigma_1$,
$\Gamma_{\mu\nu} = \varepsilon_{\mu\nu}\Omega$.
The even subalgebra $\Cl_{\bar 0}(0,2)$ is embedded as the span of $\1$ and $\Omega$, which is isomorphic to $\CC$ as an $\RR$-algebra. The pinor module is again $\ssS=\RR^2$ but is irreducible under the action of $\Cl_{\overline 0}(0,2)$, so we have a unique irreducible real spinor module $\ssS_1=\ssS$.

The standard inner product and symplectic form on $\ssS$ are both admissible bilinears; since we expect two such independent bilinears, both with symmetric Dirac current, from the classification~\cite{Alekseevsky1997}, this exhausts the possibilities. Table~\ref{table:(0,2)-admissible-bilinears} lists their properties. There are no isotropy signs because $\ssS$ is irreducible under the action of $\Cl_{\overline 0}(0,2)$.

\begin{table}[t]
\centering
	\caption{Properties of admissible bilinears and their Dirac currents in signature $(0,2)$.}
	\label{table:(0,2)-admissible-bilinears}
	\setlength{\extrarowheight}{.75ex}
	$\begin{array}{c|ccc}
		B(\epsilon,\epsilon')	& \varsigma_B	& \tau_B & \varsigma_\kappa	\\
		\hline
		\epsilon^{\mathsf{T}}\epsilon'	& +	& + & +	\\
		\epsilon^{\mathsf{T}}\Omega\epsilon'	& -	& - & +
	\end{array}$
\end{table}

\subsection{Adjoints and Fierz identity}

The \emph{Fierz identity} is a formula which allows us to rearrange products of (s)pinors involving bilinears. It is simplest to express using conjugate spinor notation; for $\epsilon\in \ssS$, we denote by $\overline\epsilon$ the element of $\ssS^*$ given by $\epsilon'\mapsto\overline\epsilon\epsilon':=B(\epsilon,\epsilon')$ (where we have fixed the bilinear $B$). Then as well as the inner product $\overline\epsilon\epsilon'$, we have an \emph{outer product} (or tensor product) $\epsilon\overline\epsilon':=\epsilon\otimes\overline\epsilon'\in \ssS\otimes \ssS^*=\End \ssS$. If we choose an abstract basis for $\ssS$ with indices $a,b,\ldots=1,2$ in which $\epsilon$ is represented by the vector $(\epsilon^a)_{a=1,2}$, its adjoint $\overline\epsilon$ by the covector $(\overline\epsilon_a)_{a=1,2}$, a bilinear $B$ by the matrix $(B_{ab})_{a,b=1,2}$, and an endomorphism $\Gamma$ by the matrix $(\Gamma\indices{^a_b})_{a,b=1,2}$, we can write the following in Einstein notation:
$
	\overline{\epsilon}_b = \epsilon^a B_{ab}$,
	$\overline{\epsilon}\epsilon = \overline{\epsilon}_b\epsilon^b = B_{ab}\epsilon^a\epsilon^b$,
	$(\epsilon\overline{\epsilon})\indices{^a_b} = \epsilon^a\overline{\epsilon}_b = \epsilon^a \epsilon^c B_{cb}$.

\begin{Proposition}[Fierz identity] \label{prop:2d-fierz}
	Let $\ssS$ be the real irreducible pinor module in signature $(1,1)$ or $(0,2)$ and $B$ either of the admissible bilinears from the appropriate table above. Then for $\epsilon,\epsilon'\in \ssS$, we have
\[
		\epsilon\overline\epsilon' = \frac{1}{2}\qty((\overline\epsilon'\epsilon)\1 + (\overline\epsilon'\Gamma_\mu\epsilon)\Gamma^\mu - \varsigma(\overline\epsilon'\Gamma_*\epsilon)\Gamma_*).
\]
\end{Proposition}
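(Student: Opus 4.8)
The plan is to verify the claimed expansion by exploiting the fact that the four $\Gamma$-matrices $\{\1, \Gamma_\mu, \Gamma_*\}$ form a basis for $\RR(2) = \End \ssS$ (as established in the signature-agnostic conventions), so any endomorphism can be written uniquely as a linear combination of them. Since $\epsilon\overline\epsilon' \in \ssS \otimes \ssS^* = \End \ssS$, I would write $\epsilon\overline\epsilon' = c_0 \1 + c^\mu \Gamma_\mu + c_* \Gamma_*$ for scalar coefficients and then extract each coefficient by pairing against a dual basis. The natural pairing here is the trace form: the trace of a product of distinct basis $\Gamma$-matrices vanishes, so I would use the trace identities already recorded in the excerpt ($\Gamma^\mu\Gamma_\mu = 2\1$, $\Gamma^\mu\Gamma_\nu\Gamma_\mu = 0$, $\Gamma^\mu\Gamma_*\Gamma_\mu = -2\Gamma_*$, together with $(\Gamma_*)^2 = -\varsigma\1$) to set up orthogonality relations among the basis elements under $\tr(XY)$.

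Concretely, I would first compute the trace pairings of the basis against itself to identify the dual basis, obtaining something like $\tr\1 = 2$, $\tr(\Gamma_\mu\Gamma^\nu) = 2\delta_\mu^\nu$, and $\tr(\Gamma_*\Gamma_*) = -2\varsigma$, while all cross-traces vanish. Then each coefficient of $\epsilon\overline\epsilon'$ is obtained as a normalised trace: for instance $c_0 = \tfrac12\tr(\epsilon\overline\epsilon')$, and $c_* = -\tfrac{\varsigma}{2}\tr(\Gamma_*\,\epsilon\overline\epsilon')$, with the sign and factor dictated by $(\Gamma_*)^2=-\varsigma\1$. The crucial step is then to convert each trace of the form $\tr(\Gamma\,\epsilon\overline\epsilon')$ into a bilinear scalar: since $\epsilon\overline\epsilon'$ is the rank-one endomorphism $\eta \mapsto \overline\epsilon'(\eta)\,\epsilon = B(\epsilon',\eta)\epsilon$, one has the general identity $\tr(\Gamma\,\epsilon\overline\epsilon') = \overline\epsilon'(\Gamma\epsilon) = \overline\epsilon'\Gamma\epsilon = B(\epsilon',\Gamma\epsilon)$. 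Applying this with $\Gamma \in \{\1,\Gamma^\mu,\Gamma_*\}$ immediately produces the coefficients $\overline\epsilon'\epsilon$, $\overline\epsilon'\Gamma^\mu\epsilon$ and $\overline\epsilon'\Gamma_*\epsilon$ appearing in the statement, with the index on the vector term raised or lowered correctly by the metric.

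The main obstacle — really the only subtle point — is bookkeeping of the signs and the placement of $\varsigma$, since these differ between the two signatures and the convention choices (the $+$ sign in the Clifford relation and the mostly-positive metric). In particular the factor $-\varsigma$ multiplying the $\Gamma_*$ term comes directly from $(\Gamma_*)^2 = -\varsigma\1$ and the raising of the vector index in $(\overline\epsilon'\Gamma_\mu\epsilon)\Gamma^\mu$ must be tracked through $\Gamma^\mu\Gamma_\mu = 2\1$; I would double-check these against the explicit Pauli-matrix representations in each signature as a consistency test. I expect no dependence on which admissible bilinear $B$ is chosen, since the argument only uses that $B$ is nondegenerate (so that $\epsilon\mapsto\overline\epsilon$ identifies $\ssS$ with $\ssS^*$ and the rank-one formula $\tr(\Gamma\,\epsilon\overline\epsilon')=\overline\epsilon'\Gamma\epsilon$ holds); the trace-orthogonality of the $\Gamma$-matrices is a property of the Clifford representation alone and is independent of $B$.
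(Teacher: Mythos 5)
Your proposal is correct and follows essentially the same route as the paper's own proof: expand the rank-one endomorphism $\epsilon\overline\epsilon'$ in the $\Gamma$-matrix basis of $\End\ssS\cong\RR(2)$, extract the coefficients via trace orthogonality (using that $\Gamma_\mu$ and $\Gamma_*$ are traceless and $(\Gamma_*)^2=-\varsigma\1$), and convert each trace via the identity $\tr(\Gamma\,\epsilon\overline\epsilon')=\overline\epsilon'\Gamma\epsilon$. The sign bookkeeping you flag, including the $-\varsigma$ on the $\Gamma_*$ term, is exactly how the paper handles it.
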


\begin{proof}
	Since $\epsilon\overline\epsilon'$ is an endomorphism of $S$, it is an $R$-linear combination of $\Gamma$-matrices; we have~${
		\epsilon\overline\epsilon' = a\1 + b_\mu\Gamma^\mu + c\Gamma_*}
$
	for some $a,b_\mu,c\in\RR$. We see from the explicit matrix representation given in Section~\ref{sec:2d-dirac-currents} that $\Gamma_\mu$ and $\Gamma_*$ are traceless, while $\tr\1=\dim \ssS = 2$, whence we compute~${\tr(\epsilon\overline\epsilon') = 2a}$,	
$\tr(\Gamma_\nu\epsilon\overline\epsilon') = 2b_\nu$, $
		\tr(\Gamma_*\epsilon\overline\epsilon') = -\varsigma2c$.
	On the other hand, for any $\Gamma\in\End \ssS$, $\tr(\Gamma\epsilon\overline\epsilon')=\overline\epsilon'\Gamma\epsilon$, which can be verified in the explicit representation or by noting that in Einstein notation, both expressions are equal to $B_{ab}\Gamma\indices{^b_c}\epsilon'^a\epsilon^c$. The result follows immediately.
\end{proof}

We can now render the definition \eqref{eq:current-bilinear-corresp} of the Dirac current $\kappa\colon\Odot^2\ssS\to V$ in the more convenient form
$\kappa(\epsilon,\epsilon')^\mu = B(\epsilon,\Gamma^\mu\epsilon')$.
Since $\kappa$ is necessarily symmetric in either signature, for~$\epsilon\in \ssS$ we define $\kappa_\epsilon:=\kappa(\epsilon,\epsilon)\in V$ which has components
$
	\kappa_\epsilon^\mu=\overline\epsilon\Gamma^\mu\epsilon=B(\epsilon,\Gamma^\mu\epsilon)$.

The Fierz identity has consequences for the causal properties of the Dirac current.

\begin{Corollary}\label{coro:2d-dirac-causality}
	Let $\epsilon\in \ssS$. Then
$
		\norm{\kappa_\epsilon}^2 = (\overline\epsilon\epsilon)^2 + \varsigma(\overline\epsilon\Gamma_*\epsilon)^2$.
	In particular, in the Riemannian case $(\varsigma=+1)$, $\kappa_\epsilon=0$ if and only if $\epsilon=0$; in the Lorentzian case $(\varsigma=-1)$, $\kappa$ is null if and only if $\epsilon$ is chiral, otherwise $\kappa$ is spacelike for $\varsigma_B=+1$ and timelike for $\varsigma_B=-1$.
\end{Corollary}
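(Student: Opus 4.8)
The plan is to derive the norm formula directly from the Fierz identity (Proposition~\ref{prop:2d-fierz}) together with the Clifford contraction identities $\Gamma^\mu\Gamma_\mu = 2\1$, $\Gamma^\mu\Gamma_\nu\Gamma_\mu = 0$ and $\Gamma^\mu\Gamma_*\Gamma_\mu = -2\Gamma_*$ recorded in Section~\ref{sec:gamma-matrix}. Writing $\kappa_\epsilon^\mu = \overline\epsilon\Gamma^\mu\epsilon$ and using that each factor is a scalar, I would first rewrite
\[
	\norm{\kappa_\epsilon}^2 = (\overline\epsilon\Gamma_\mu\epsilon)(\overline\epsilon\Gamma^\mu\epsilon) = \overline\epsilon\,\Gamma_\mu(\epsilon\overline\epsilon)\Gamma^\mu\,\epsilon,
\]
turning the contraction into a sandwich around the outer product $\epsilon\overline\epsilon$. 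Substituting the Fierz expression for $\epsilon\overline\epsilon$ and applying the three contraction identities term-by-term collapses the middle factor to $(\overline\epsilon\epsilon)\1 + \varsigma(\overline\epsilon\Gamma_*\epsilon)\Gamma_*$; sandwiching this between $\overline\epsilon$ and $\epsilon$ and reading off $\overline\epsilon\1\epsilon = \overline\epsilon\epsilon$ and $\overline\epsilon\Gamma_*\epsilon$ then yields $\norm{\kappa_\epsilon}^2 = (\overline\epsilon\epsilon)^2 + \varsigma(\overline\epsilon\Gamma_*\epsilon)^2$. This is the only genuine computation and it is short.

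For the Riemannian specialisation ($\varsigma=+1$) the right-hand side is a sum of two squares, hence vanishes iff both $\overline\epsilon\epsilon$ and $\overline\epsilon\Gamma_*\epsilon$ vanish. I would then check, for each of the two admissible bilinears of Table~\ref{table:(0,2)-admissible-bilinears}, that one of these scalars already detects $\epsilon$: for $B=\epsilon^{\mathsf{T}}\epsilon'$ one has $\overline\epsilon\epsilon = \abs{\epsilon}^2$, while for the symplectic $B$ one has $\overline\epsilon\epsilon = 0$ identically but $\overline\epsilon\Gamma_*\epsilon = \epsilon^{\mathsf{T}}\Omega^2\epsilon = -\abs{\epsilon}^2$ since $\Gamma_*=\Omega$ and $\Omega^2=-\1$. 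In either case a nonzero $\epsilon$ gives a nonzero term, so $\kappa_\epsilon=0\iff\epsilon=0$.

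For the Lorentzian case ($\varsigma=-1$) I would exploit the chirality splitting $\ssS=\ssS_+\oplus\ssS_-$ into $\Gamma_*$-eigenspaces together with the fact (Table~\ref{table:(1,1)-admissible-bilinears}) that both admissible bilinears have isotropy $\iota_B=-1$, i.e.\ $B(\ssS_\pm,\ssS_\pm)=0$. Writing $\epsilon=\epsilon_++\epsilon_-$ and using $\Gamma_*\epsilon=\epsilon_+-\epsilon_-$, the isotropy collapses the two scalars to multiples of the single off-diagonal pairing $B(\epsilon_+,\epsilon_-)$: for a symmetric $B$ one finds $\overline\epsilon\Gamma_*\epsilon=0$ and $\overline\epsilon\epsilon=2B(\epsilon_+,\epsilon_-)$, while for a skew $B$ one finds $\overline\epsilon\epsilon=0$ and $\overline\epsilon\Gamma_*\epsilon=-2B(\epsilon_+,\epsilon_-)$. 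Substituting into the norm formula gives $\norm{\kappa_\epsilon}^2 = 4\varsigma_B\,B(\epsilon_+,\epsilon_-)^2$ in both cases, whose sign is exactly $\varsigma_B$; in the mostly-positive convention a negative norm is timelike, matching the stated dichotomy. Since $\ssS_\pm$ are one-dimensional and $B$ (given by an invertible matrix) pairs them nondegenerately, $B(\epsilon_+,\epsilon_-)=0$ iff $\epsilon_+=0$ or $\epsilon_-=0$, i.e.\ iff $\epsilon$ is \emph{chiral}, which gives the nullity criterion.

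I expect the single formula to be routine; the fiddly part is the bookkeeping in the second half — keeping the signature-dependent sign conventions straight (in particular that timelike corresponds to $\norm{\cdot}^2<0$ here) and noting that the Riemannian and Lorentzian cases genuinely require different arguments, because the real $\Gamma_*$-eigenspace (chirality) decomposition exists only in signature $(1,1)$.
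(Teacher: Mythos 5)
Your proposal is correct and follows essentially the same route as the paper: the norm formula comes from a single application of the Fierz identity, and the causal dichotomy from observing that $\varsigma_B$ determines which of the two quadratic forms $\overline\epsilon\epsilon$, $\overline\epsilon\Gamma_*\epsilon$ vanishes identically, with the isotropy $\iota_B=-1$ and the chiral splitting settling the Lorentzian case. The only cosmetic differences are that you derive the formula by sandwiching the Fierz expansion as $\overline\epsilon\Gamma_\mu(\epsilon\overline\epsilon)\Gamma^\mu\epsilon$ and invoking the contraction identities, whereas the paper applies Fierz to $(\overline\epsilon\epsilon)^2 = \overline\epsilon(\epsilon\overline\epsilon)\epsilon$ and rearranges, and that your Lorentzian argument makes the paper's structural claim explicit by computing both forms as multiples of $B(\epsilon_+,\epsilon_-)$.
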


\begin{proof}
	For the first claim, applying the Fierz identity to the expression $(\overline\epsilon\epsilon)^2$ gives us
\[
		(\overline\epsilon\epsilon)^2 = \overline\epsilon(\epsilon\overline\epsilon)\epsilon
			= \frac{1}{2}\qty((\overline\epsilon\epsilon)(\overline\epsilon\epsilon) + (\overline\epsilon\Gamma_\mu\epsilon)(\overline\epsilon\Gamma^\mu\epsilon) - \varsigma(\overline\epsilon\Gamma_*\epsilon)(\overline\epsilon\Gamma_*\epsilon)),
\]
	which can be rearranged to give the desired expression. Now, consider the two quadratic forms~$\overline\epsilon\Gamma_*\epsilon$ and $\overline\epsilon\epsilon$ and note that $\overline\epsilon\Gamma_*\epsilon=0$ if $\varsigma_B=+1$ and $\overline\epsilon\epsilon=0$ if $\varsigma_B=-1$. For $\varsigma=+1$, the expression which does not vanish identically vanishes if and only if $\epsilon=0$ (check this in the explicit representation). For $\varsigma=-1$, since the isotropy $\iota_B=-1$ for either admissible bilinear (see Table~\ref{table:(1,1)-admissible-bilinears}) and $\Gamma_*$ preserves the chiral subspaces, the expression which does not vanish identically vanishes if and only if $\epsilon\in \ssS_\pm$. This proves the second claim.
\end{proof}

\section{Spencer cohomology and filtered subdeformations}
\label{sec:2d-spencer}

We now consider flat model superalgebras $\fs$ (as defined in \cite[Definition~2.1]{Beckett2024_2}) associated to the inner product space $V=\RR^{1,1}$ or $\RR^{0,2}$ with odd part $S=\ssS$ or $S=\ssS_+$ (the latter only in the Lorentzian case) and the Dirac current $\kappa\colon\Odot^2S\to V$ being the one associated to one of the two admissible bilinears described in Table~\ref{table:(1,1)-admissible-bilinears} or Table~\ref{table:(0,2)-admissible-bilinears} respectively, or a restriction thereof (more details below). Note that only superalgebras are possible here since all Dirac currents are symmetric. In either signature, the two Dirac currents on $\ssS$ can be distinguished by the invariant $\varsigma_B=\pm 1$. We will determine the Spencer cohomology group $\ssH^{2,2}(\fs_-;\fs)$ for each such superalgebra as well as their maximally supersymmetric filtered subdeformations.

We recall that each superalgebra $\fs$ is $\ZZ$-graded with $\fs_{-2}=V$, $\fs_{-1}=S$, $\fs_0=\fso(V)$ and $\fs_i=0$ otherwise, and its brackets are
\begin{gather}
	\comm{A}{B} = AB-BA = 0,\qquad	
	\comm{A}{v} = Av,\qquad
	 \comm{v}{w} = 0,\nonumber	\\
	\comm{A}{s} = A\cdot s = \frac{1}{2}\omega_A\cdot s,\qquad
	 \comm{v}{s} = 0,\qquad
	 \comm{s}{s'} = \kappa(s,s'),\label{eq:2d-flat-brackets}
\end{gather}
for $A,B\in\fs_0$, $s,s'\in\fs_{-1}$, $v,w\in\fs_{-2}$, and we will make use of depolarised equations involving symmetric combinations of spinors; for example the odd-odd bracket can be defined as $\comm{s}{s}=\kappa_s:=\kappa(s,s)$ for all $s\in S$. Note that since $\dim V=2$, $\dim\fs_0=\dim\fso(V)=1$, hence the first bracket must vanish.

In signature $(0,2)$, the pinor representation is irreducible under the action of the spin group, thus we take $S=\ssS_1=\ssS$ and for either choice of Dirac current from Table~\ref{table:(0,2)-admissible-bilinears}, $\fs$ is a minimal flat model superalgebra. We note that $\End S\cong\Cl(V)$.

In the $(1,1)$ case, the pinor representation is reducible under the spin group with $\ssS=\ssS_+\oplus\ssS_-$. Table~\ref{table:(1,1)-admissible-bilinears} gives the two possible Dirac currents on $\ssS$, both of which have $\iota_\kappa=+1$ and restrict to the same non-trivial Dirac current on $\ssS_\pm$. Thus for either map we can choose $S=\ssS$ (which we will call the ``non-chiral'' case) or, without loss of generality, $S=\ssS_+$ (which we call ``chiral''). In the non-chiral case we have $\End S\cong\Cl(V)$, while in the chiral case $\End S\cong\Cl_{\overline 0}(V)$. In what follows, we will work mainly with the non-chiral case and then bootstrap the results for the chiral case from it at the end. We note that only the chiral case gives a \emph{minimal} flat model superalgebra.

\subsection{Spencer cohomology}

In either signature, let us fix $S=\ssS$ and an admissible bilinear $B$ and corresponding Dirac current $\kappa$, recalling that in either signature there are two choices for $B$ (or $\kappa$) distinguished by $\varsigma_B=\pm 1$. We let $\fs$ be the corresponding flat model. We showed in \cite[Theorem~3.10]{Beckett2024_2} that the Killing superalgebra associated to an admissible connection (see Definition~\ref{def:killing-spinor} and also~\cite[Definition~3.6]{Beckett2024_2}) on the spinor bundle of a spin manifold with Dirac current $\kappa$ is a filtered subdeformation of a flat model superalgebra $\fs$; that is, it is a filtered Lie superalgebra whose associated graded Lie superalgebra is isomorphic to a graded subalgebra $\fa$ of $\fs$.

We recall that the Spencer complex $(\ssC^{\bullet,\bullet}(\fs_-;\fs),\partial)$ of $\fs$ is the Chevalley--Eilenberg complex of the graded subalgebra $\fs_-=\Otimes_{i<0}\fs_i=V\oplus S$ with values in the module $\fs$, where the action is the restriction of the adjoint representation of $\fs$ to $\fs_-$, and apart from the homological grading, it inherits a grading from that of $\fs$ compatible with the differential $\partial$. In particular, the graded components of the cochain spaces are $\ssC^{d,p}(\fs_-;\fs)=0$ for $p<0$ and
\[
	\ssC^{d,p}(\fs_-;\fs) = \qty(\Wedge^p \fs_-^*\otimes \fs)_d = \Hom\qty(\Wedge^p \fs_-,\fs)_d
\]
for $p\geq 0$, where $\Wedge^p$ is taken in the ``super-sense'' and the subscript $d$ indicates maps of degree~$d$ with respect to the grading inherited from $\fs$. We will not explicitly give the formula for $\partial$ here; for a full description see \cite[Section~2.2]{Beckett2024_2} or the original work of Cheng and Kac on filtered deformations of graded superalgebras \cite{Cheng1998}. The natural action of $\fs$ on the full cochain spaces~$\ssC^{\bullet,p}(\fs_-;\fs) = \Wedge^p \fs_-^*\otimes \fs = \Hom\qty(\Wedge^p \fs_-,\fs)$ restricts to an action of the subalgebra~$\fs_0=\fso(V)$ which preserves the grading $d$ and differential $\partial$, so it also preserves the spaces of graded cocycles~$\ssZ^{d,p}(\fs_-;\fs)$ and coboundaries $\ssB^{d,p}(\fs_-;\fs)$, whence there is an induced action of $\fs_0$ on the graded cohomology groups $\ssH^{d,p}(\fs_-;\fs)=\ssZ^{d,p}(\fs_-;\fs)/\ssB^{d,p}(\fs_-;\fs)$.

Of most relevance here is the cohomology group $\ssH^{2,2}(\fs_-;\fs)$ which contains the infinitesimal filtered deformations of certain graded subalgebras (namely the maximally supersymmetric ones) of $\fs$, about which we will say more in Section~\ref{sec:2d-max-susy-subdef}. The $d=2$ subcomplex is
\begin{align*}
	0 &\longrightarrow \ssC^{2,1}(\fs_-;\fs) = \Hom(V,\fso(V)) \\
	&\longrightarrow \ssC^{2,2}(\fs_-;\fs) = \Hom\bigl(\Wedge^2V,V\bigr)\oplus\Hom(V\otimes S,S)\oplus\Hom\bigl(\Odot^2S,\fso(V)\bigr)\\
	&\longrightarrow \ssC^{2,3}(\fs_-;\fs) = \Hom\bigl(V\otimes\Odot^2S,V\bigr)\oplus\Hom\bigl(\Odot^3S,S\bigr)\\
	&\longrightarrow 0.
\end{align*}
The first non-trivial codifferential $\partial\colon \ssC^{2,1}(\fs_-;\fs) \to \ssC^{2,2}(\fs_-;\fs)$ is injective and projects to an isomorphism onto the first component of $\ssC^{2,2}(\fs_-;\fs)$, whence any $(2,2)$-cocycle (i.e., an element of $\ssZ^{2,2}(\fs_-;\fs)$) is homologous to a unique cocycle $\beta+\gamma$ with $\beta\in\Hom(V\otimes S,S)$ and $\gamma\in\Hom\bigl(\Odot^2S,\fso(V)\bigr)$. We call a cocycle of the latter form \emph{normalised} and denote the space of normalised cocycles by $\cH^{2,2}$; it follows that $\cH^{2,2}\cong\ssH^{2,2}(\fs_-;\fs)$ as $\fs_0=\fso(V)$-modules \cite[Lemma~4.4]{Beckett2024_2}. Explicitly, the cocycle condition $\partial(\beta+\gamma)=0$ is equivalent to
\begin{align}
	&2\kappa(s,\beta(v,s)) + \gamma(s,s)v = 0, \label{eq:spencer-1}\\
	&\beta(\kappa_s,s) + \gamma(s,s)\cdot s = 0 \label{eq:spencer-2}
\end{align}
for all $s\in S$, $v\in V$ -- these equations also appeared as \cite[equations~(4.2) and (4.3)]{Beckett2024_2}. As in loc.\ cit., we refer to them as the \emph{normalised Spencer cocycle conditions} for $(2,2)$-cochains since $\cH^{2,2}$ is precisely the space of solutions to this system.

Let us now solve this system of equations using a method previously used in work applying Spencer cohomology to supergravity \cite{Beckett2021,deMedeiros2016,deMedeiros2018,Figueroa-OFarrill2017}. We begin by parametrising $\beta$. Since $\Hom(V\otimes S,S)\cong\Hom(V,\End S)$, for $v\in V$ we can write $\beta_v\in\End S$ for the endomorphism $s\mapsto\beta_v(s):=\beta(v,s)$, and $\beta_\mu:=\beta_{e_\mu}$ in the orthonormal basis $\{e_\mu\}$. Then since $\End S\cong \Cl(V)\cong\RR(2)$, $\beta$ can be parametrised as
$\beta_\mu = a_\mu \1 + b_{\mu\nu}\Gamma^\nu + c_\mu \Gamma_*
$
where we use the Einstein summation convention and the coefficients take real values; we can consider them to be the components of some $a,c\in V^*$, $b\in \Otimes^2V^*$. Now, the first cocycle condition \eqref{eq:spencer-1} is equivalent to
\[
	\gamma(s,s)_{\mu\nu} = -2\overline{s}\Gamma_\mu\beta_\nu s,
\]
where in forming the conjugate $\overline{s}$ we use the admissible bilinear $B$. This equation completely determines $\gamma$ in terms of $\beta$, and since $\gamma$ must take values in $\fso(V)$, it places the following constraint on $\beta$:
\begin{equation}\label{eq:cocycle-1-constr}
	\overline{s}\Gamma_{(\mu}\beta_{\nu)} s = 0
\end{equation}
for all $s\in S$. Using our parametrisation for $\beta$ and evaluating products of $\Gamma$-matrices gives us
\begin{align*}
	\overline{s}\Gamma_\mu\beta_\nu s
		&= a_\nu\overline{s}\Gamma_\mu s + b_{\nu\rho}\overline{s}\Gamma_\mu\Gamma^\rho s + c_\nu\overline{s}\Gamma_\mu\Gamma_*s	\\
		&= b_{\nu\mu}\overline{s}s + (a_\nu\eta_{\mu\rho} + \varsigma c_\nu\varepsilon_{\mu\rho})\overline{s}\Gamma^\rho s + \varepsilon\indices{_\mu^\rho} b_{\nu\rho}\overline{s}\Gamma_* s,
\end{align*}
where we recall that $\varepsilon_{\mu\nu}$ here denotes the Levi-Civita symbol. If $\varsigma_B=+1$, note that $\overline s\Gamma_*s=0$ for all $s\in S$, and one can show that \eqref{eq:cocycle-1-constr} holds if and only if
$b_{(\nu\mu)} = 0
$ and $
	a_{(\nu}\eta_{\mu)\rho}+\varsigma c_{(\nu}\varepsilon_{\mu)\rho} = 0$ for $\varsigma_B=+1$.
Since we are working in two dimensions, the first equation implies that ${b_{\mu\nu}=b\varepsilon_{\mu\nu}}$ for some $b\in\RR$. Fully symmetrising the latter equation gives $a_{(\mu}\eta_{\nu\rho)}=0$, and tracing this gives ${a=0}$. Substituting this back into the full equation then gives us $c=0$. On the other hand, if $\varsigma_B=-1$, $\overline{s}s = 0$ for all $s\in S$, and \eqref{eq:cocycle-1-constr} holds if and only if
\smash{$
	\varepsilon\indices{_{(\mu}^\rho} b\indices{_{\nu)\rho}} = 0
$} and $
	a_{(\nu}\eta_{\mu)\rho}+\varsigma c_{(\nu}\varepsilon_{\mu)\rho} = 0$
 for $ \varsigma_B=-1$.
One can show (for instance by substituting values for $\mu$, $\nu$) that the first equation is satisfied if and only if $b_{\mu\nu}=b\eta_{\mu\nu}$ and again the latter equation has only the trivial solution. Thus we have shown that $\beta$ is parametrised by a single parameter $b\in \RR$, with
\begin{equation}\label{eq:2d-real-beta-soln}
	\beta_\mu =
	\begin{cases}
		b\varepsilon_{\mu\nu}\Gamma^\nu	& \text{for } \varsigma_B=+1,\\
		b\Gamma_\mu						& \text{for } \varsigma_B=-1.
	\end{cases}
\end{equation}
Substituting this back into our equation for $\gamma$, we have
\begin{equation}\label{eq:2d-real-gamma-soln}
	\gamma(s,s)_{\mu\nu} =
	\begin{cases}
		2b\varepsilon_{\mu\nu}\overline{s}s				& \text{for } \varsigma_B=+1,\\
		-2b\varepsilon_{\mu\nu}\overline{s}\Gamma_*s	& \text{for } \varsigma_B=-1.
	\end{cases}
\end{equation}
The remaining cocycle condition \eqref{eq:spencer-2} is then identically satisfied by the Fierz identity, as we will now demonstrate. We have
\begin{align*}
	\beta(\kappa_s,s) + \gamma(s,s)\cdot s
		&= \overline{s}\Gamma^\mu s \beta_\mu s + \frac{1}{4}\gamma(s,s)_{\mu\nu}\Gamma^{\mu\nu}s	\\
		&=
			\begin{cases}
				b\varepsilon_{\mu\nu}(\overline{s}\Gamma^\mu s)\Gamma^\nu s + \varsigma b(\overline{s}s)\Gamma_* s	& \text{for } \varsigma_B=+1, 	\\
				b(\overline{s}\Gamma_\mu s)\Gamma^\mu s - \varsigma b(\overline{s}\Gamma_* s)\Gamma_* s & \text{for } \varsigma_B=-1.
			\end{cases}
\end{align*}
Now, the Fierz identity with either bilinear (see Proposition~\ref{prop:2d-fierz}) gives us
\[
	s\overline{s} = \frac{1}{2}\qty((\overline{s}s)\1 + (\overline{s}\Gamma^\mu s)\Gamma_\mu - \varsigma(\overline{s}\Gamma_* s) \Gamma_*),
\]
but we note that the first term on the right-hand side vanishes for $\varsigma_B=-1$, and the third vanishes for $\varsigma_B=+1$. Thus for $\varsigma_B=+1$,
\begin{align*}
	\varepsilon_{\mu\nu}(\overline{s}\Gamma^\mu s)\Gamma^\nu s
		&= \varepsilon_{\mu\nu}\Gamma^\nu (s\overline{s})\Gamma^\mu s	= \frac{1}{2}(\overline{s}s)\varepsilon_{\mu\nu}\Gamma^\nu\Gamma^\mu s
			+ \frac{1}{2}(\overline{s}\Gamma_\rho s)\varepsilon_{\mu\nu}\Gamma^\nu\Gamma^\rho\Gamma^\mu s	= -\varsigma(\overline{s}s)\Gamma_*s,
\end{align*}
and for $\varsigma_B=-1$,
\begin{align*}
	(\overline{s}\Gamma_\mu s)\Gamma^\mu s
		&= \Gamma^\mu (s\overline{s})\Gamma_\mu s	= \frac{1}{2}(\overline{s}\Gamma^\rho s)\Gamma^\mu\Gamma^\rho\Gamma_\mu s - \varsigma\frac{1}{2}(\overline{s}\Gamma_* s)\Gamma^\mu\Gamma_*\Gamma_\mu s	= \varsigma(\overline{s}\Gamma_* s)\Gamma_* s,
\end{align*}
whence $\beta(\kappa_s,s) + \gamma(s,s)\cdot s=0$, so we have solved the Spencer cocycle conditions \eqref{eq:spencer-1} and~\eqref{eq:spencer-2} in the minimal case for $(0,2)$ and the minimal non-chiral case for $(1,1)$ ($S=\ssS$). In the minimal chiral $(1,1)$ case ($S=\ssS_+$), we can follow a similar argument except that we must set $b_{\mu\nu}=c_\mu=0$ in the parametrisation of $\beta$, whence we have only the trivial solution. In particular, we have shown the following.

\begin{Proposition}\label{prop:2d-real-spencer-soln}
	In signature $(1,1)$ or $(0,2)$, let $\fs$ be the flat model superalgebra defined by ${S=\ssS}$ and $\kappa$ any of the Dirac currents from Table {\rm\ref{table:(1,1)-admissible-bilinears}} or Table~{\rm\ref{table:(0,2)-admissible-bilinears}}, respectively. Then
${\ssH^{2,2}(\fs_-;\fs) \cong \RR}$
	as an $\fs_0=\fso(V)$-module. The space of normalised cocycles $\cH^{2,2}$ consists of elements $\beta+\gamma$ given by \eqref{eq:2d-real-beta-soln}, \eqref{eq:2d-real-gamma-soln} for $b\in\RR$.
	
In signature $(1,1)$, if $S=\ssS_+$, then there is a unique non-trivial Dirac current $($up to rescaling$)$ and we have
$\ssH^{2,2}(\fs_-;\fs) = 0$.
\end{Proposition}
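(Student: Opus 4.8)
The plan is to prove both assertions by computing the space $\cH^{2,2}$ of normalised cocycles together with its $\fso(V)$-action, and then transporting the result to $\ssH^{2,2}(\fs_-;\fs)$ via the isomorphism $\cH^{2,2}\cong\ssH^{2,2}(\fs_-;\fs)$ of $\fso(V)$-modules recorded above. A normalised cocycle is a pair $\beta+\gamma$ satisfying the normalised Spencer cocycle conditions \eqref{eq:spencer-1} and \eqref{eq:spencer-2}, so the entire problem is to solve this system; the calculation in the paragraphs above carries this out, and the first task is simply to assemble those intermediate results into the stated conclusion.

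For $S=\ssS$ I would proceed as follows. Parametrising $\beta$ through the $\Gamma$-matrix basis of $\End S\cong\Cl(V)$ and imposing \eqref{eq:spencer-1} both fixes $\gamma$ uniquely in terms of $\beta$ and forces the $\fso(V)$-valuedness constraint \eqref{eq:cocycle-1-constr}. Solving \eqref{eq:cocycle-1-constr} in the two cases $\varsigma_B=\pm 1$---using $\overline{s}\Gamma_*s=0$ in the first and $\overline{s}s=0$ in the second, together with $\dim V=2$---reduces $\beta$ to the single scalar $b$ of \eqref{eq:2d-real-beta-soln}, with $\gamma$ given by \eqref{eq:2d-real-gamma-soln}, while the remaining condition \eqref{eq:spencer-2} becomes an identity by the Fierz identity of Proposition~\ref{prop:2d-fierz}. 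This already shows $\dim_\RR\cH^{2,2}=1$. To identify the module structure I would verify directly that the spanning cocycle is $\fso(V)$-invariant: its $\beta$-part equals (up to the scalar $b$) either $v\mapsto(v\cdot)$ or $v\mapsto(v\cdot)\Gamma_*$, both of which are $\fso(V)$-equivariant because Clifford multiplication intertwines the vector and spinor actions and the volume element $\Gamma_*$ is invariant, and $\gamma$ is determined equivariantly from $\beta$. Hence the generator of $\fso(V)$ annihilates the line, so $\cH^{2,2}\cong\RR$ as the \emph{trivial} module, and likewise $\ssH^{2,2}(\fs_-;\fs)\cong\RR$.

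For the chiral Lorentzian case $S=\ssS_+$ I would first note that the two admissible bilinears of Table~\ref{table:(1,1)-admissible-bilinears} restrict to the same Dirac current on $\ssS_+$, giving uniqueness up to rescaling, and that this current is null by Corollary~\ref{coro:2d-dirac-causality}. The cocycle analysis runs as before, but now $\End\ssS_+\cong\RR$, so only the scalar part of $\beta$ survives: one sets $b_{\mu\nu}=c_\mu=0$ and $\beta_\mu=a_\mu\1$. The $\fso(V)$-valuedness of $\gamma$ then reads $a_{(\mu}\overline{s}\Gamma_{\nu)}s=0$ for all $s\in\ssS_+$; since $\kappa_s$ (with components $\overline{s}\Gamma^\mu s$) is a nonzero multiple of one fixed null vector $n$ for every $s\neq 0$, this collapses to $a_{(\mu}n_{\nu)}=0$, which in the degenerate two-dimensional Lorentzian geometry forces $a=0$ (contract with $n$ and use that $n$ is null). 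Thus $\beta=\gamma=0$, giving $\cH^{2,2}=0$ and $\ssH^{2,2}(\fs_-;\fs)=0$.

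The step I expect to be the main obstacle is the module-theoretic identification in the case $S=\ssS$, namely confirming that the surviving line carries the trivial character rather than some nontrivial character of $\fso(V)\cong\RR$; a direct invariance check of the generating cocycle is the cleanest route, since it works uniformly in both signatures (whereas appealing to representation theory of the group would cover the compact case $\SO(2)$ but not $\SO(1,1)$). The closely related endgame of the chiral case---where it is precisely the nullity of the Dirac current from Corollary~\ref{coro:2d-dirac-causality} that eliminates the last free parameter---is the other point requiring genuine input rather than formal manipulation.
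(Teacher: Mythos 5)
Your proposal is correct and follows essentially the same route as the paper: parametrise $\beta$ in the $\Gamma$-matrix basis of $\End S$, use \eqref{eq:spencer-1} to fix $\gamma$ and extract the constraint \eqref{eq:cocycle-1-constr}, solve it case-by-case in $\varsigma_B$ to land on \eqref{eq:2d-real-beta-soln} and \eqref{eq:2d-real-gamma-soln}, and dispose of \eqref{eq:spencer-2} via the Fierz identity of Proposition~\ref{prop:2d-fierz}. The two points where you supply more detail than the paper --- the explicit $\fso(V)$-invariance check of the generating cocycle (genuinely needed in Lorentzian signature, since $\SO(1,1)$ does admit nontrivial one-dimensional representations) and the chiral-case argument that $a_{(\mu}n_{\nu)}=0$ for the nonzero null vector $n=\kappa_s$ forces $a=0$ --- are steps the paper only asserts, and your treatments of both are sound.
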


In either signature, if $S=\ssS$, one could define the Dirac current $\kappa$ to be any linear combination of those from Table \ref{table:(1,1)-admissible-bilinears} or Table~\ref{table:(0,2)-admissible-bilinears} (as appropriate). One could then view $\kappa$ as being obtained from a~bilinear $B$ which is not admissible unless it is a non-zero scalar multiple of one of the two from the table. One can show that $\ssH^{2,2}(\fs_-;\fs)=\RR$ whenever $B$ is non-degenerate. However, since the general calculation is somewhat more involved and the case where $B$ is not admissible is perhaps of limited interest, we omit the details.

\subsection{Maximally supersymmetric filtered subdeformations}
\label{sec:2d-max-susy-subdef}

Let us now use the result above to describe the filtered subdeformations of $\fs$ with odd part ${\fs_{-1}=S}$; i.e., the maximally supersymmetric deformations, in the terminology of \cite[Definition~4.10]{Beckett2024_2}. Since we found that the Spencer cohomology in the chiral case ($S=\ssS_+$) in Lorentzian signature is trivial, we consider only the non-chiral case ($S=\ssS$) in either signature. Note that there is no sub-maximal highly supersymmetric case here since $\dim S=2$.

In \cite[Section~4.5.5]{Beckett2024_2}, it is argued that odd-generated maximally supersymmetric filtered subdeformations are determined by normalised cocycles $\beta+\gamma\in\cH^{2,2}$ which are invariant under the action of the subalgebra $\fh_{(\beta+\gamma)}:=\gamma(\fD)$ of $\fso(V)$, where $\fD=\ker\kappa\subseteq\Odot^2 S$ is the Dirac kernel, and which satisfy some integrability conditions. In the present case, $\dim\fso(V)=1$, so we must either have $\fh_{(\beta+\gamma)}=0$ or $\fh_{(\beta+\gamma)}=\fso(V)$. Moreover, any element of $\cH^{2,2}$ is actually $\fso(V)$-invariant since $\cH^{2,2}\cong\RR$, the trivial $\fso(V)$-module. In particular, it suffices to study filtered deformations $\fstilde$ of the whole superalgebra $\fs$, since any deformation of the unique proper maximally supersymmetric subalgebra $\fs_-$ can be extended to a deformation of $\fs$.

It remains only to check the integrability conditions, which were developed in \cite[Section~4.4.2]{Beckett2024_2}. We adapt the definition of integrable cocycles \cite[Definition~4.23]{Beckett2024_2} as follows. Fixing~${\beta+\gamma\in\cH^{2,2}}$, let $\Theta\colon V\otimes\Odot^2S\to\fso(V) $ be the map defined by
$\Theta(v,s,s) = 2\gamma\qty(s,\beta(v,s))
$
for~${v\in V}$, $s\in S$. We say that $\beta+\gamma$ is \emph{integrable} if the following hold.
\begin{enumerate}\itemsep=0pt
	\item The map $\Theta$ annihilates the Dirac kernel $\fD$ so factors through a map $\theta\colon V\otimes V\to\fso(V)$ making the following diagram commute
	\[
	\begin{tikzcd}
		V\otimes \Odot^2 S' \ar[rr,"\Theta"]\ar[rd,"\Id\otimes \kappa"'] & & \fso(V)\\
			& V \otimes V \ar[ru,"\theta"']. &
	\end{tikzcd}
	\]
	It follows that this map $\theta$ is skew-symmetric.
	\item The map $\theta\colon \Wedge^2 V\to \fso(V)$ satisfies
	\[
		\theta(v,w)\cdot s
		= \beta\qty(v,\beta(w,s)) - \beta\qty(w,\beta(v,s))
	\]
	for all $v,w\in V$, $s\in S$.
If $\beta+\gamma$ is integrable (and non-zero) then it defines a (non-trivial) filtered deformation $\fstilde$ of $\fs$ with the following brackets (compare with undeformed brackets~\eqref{eq:2d-flat-brackets} on the same underlying vector space)
\begin{gather}
		\comm{A}{B} = AB-BA=0,\qquad
		 \comm{A}{v} = Av \in S,\qquad
		\comm{v}{w} = \theta(v,w) \in\fso(V),\nonumber	\\
		\comm{A}{s} = A\cdot s=\frac{1}{2}\omega_A\cdot s \in S,\qquad
		 \comm{v}{s} = \beta(v,s) \in S,\nonumber\\
		 \comm{s}{s}= \kappa_s + \gamma(s,s) \in V\oplus\fso(V),\label{eq:2d-deformed-brackets}
\end{gather}
for $A,B\in\fs_0$, $s,s'\in\fs_{-1}$, $v,w\in\fs_{-2}$, and every filtered deformation of $\fs$ is obtained in this manner.
	\end{enumerate}

We will show that every element of $\cH^{2,2}$ is in fact integrable and then describe the filtered deformations more explicitly. For the first condition, a simple computation in an orthonormal basis shows that
\[
	\Theta_{\mu\nu\rho}(s,s)
	:= \Theta_{\mu\nu}(e_\rho,s,s)
	= 2\gamma_{\mu\nu}(s,\beta_\rho s)
	= \theta_{\mu\nu\rho\sigma}\kappa_s^\sigma,
\]
where
\[
	\theta_{\mu\nu\rho\sigma} =
		\begin{cases}
			4b^2\varepsilon_{\mu\nu}\varepsilon_{\rho\sigma}
				& \text{for } \varsigma_B = +1,	\\
			\sigma 4b^2\varepsilon_{\mu\nu}\varepsilon_{\rho\sigma}
				& \text{for } \varsigma_B = -1.
		\end{cases}
\]
Thus we have shown that $\Theta$ factors through a map $\theta\colon\Wedge^2 V\to \fso(V)$ with components given above, hence the first integrability condition is satisfied. In component form, the second integrability condition is
$\frac{1}{4}\theta_{\mu\nu\rho\sigma}\Gamma^{\rho\sigma}s = \comm{\beta_\mu}{\beta_\nu}s
$
for all $s\in S$. The left-hand side can be rewritten~as
\[
	\frac{1}{4}\theta_{\mu\nu\rho\sigma}\Gamma^{\rho\sigma}s
		= \frac{1}{4}\theta_{\mu\nu\rho\sigma}\varepsilon^{\rho\sigma}\Gamma_*s
		= 	\begin{cases}
				\varsigma 2b^2\varepsilon_{\mu\nu}\Gamma_*
					& \text{for } \varsigma_B = +1,	\\
				2b^2\varepsilon_{\mu\nu}\Gamma_*
					& \text{for } \varsigma_B = -1.
			\end{cases}
\]
The commutator in the right-hand side can easily be computed to give
\[
	\comm{\beta_\mu}{\beta_\nu} =
		\begin{cases}
			\varsigma 2b^2\varepsilon_{\mu\nu}\Gamma_*
				& \text{for } \varsigma_B = +1,	\\
			2b^2\varepsilon_{\mu\nu}\Gamma_*
				& \text{for } \varsigma_B = -1,
		\end{cases}
\]
whence the integrability condition is identically satisfied.

Thus for either signature and any choice of Dirac current, there is a one-parameter family of filtered deformations of $\fs$ with parameter $b\in\RR$. Let us describe the brackets \eqref{eq:2d-deformed-brackets} of such a~deformation $\fstilde$ for fixed $b\neq 0$ in terms of an explicit basis for the even part of the superalgebra $\fs$. In our chosen orthonormal basis, let $P_\mu$ denote the infinitesimal translation generators (of which there are two in either signature) and let $L_{\mu\nu}$ denote the infinitesimal generators of $\fso(V)$; in our case there is one such generator $L_{\mu\nu}=\varepsilon_{\mu\nu} L_*$ where $L_* = L_{01}$ in signature $(1,1)$ and $L_*=L_{12}$ in signature $(0,2)$.

The brackets $\comm{\fs_0}{\fs_0}$, $\comm{\fs_0}{\fs_{-1}}$ and $\comm{\fs_0}{\fs_{-2}}$ are not deformed; the first is trivial while the others are
$\comm{L_*}{s} = \frac{1}{2}\Gamma_* s$,
$\comm{L_*}{P_\mu} = -\varsigma \varepsilon_{\mu\nu}P^\nu$,
and the deformed brackets take the following form:
\[
	\comm{P_\mu}{P_\nu} = \varsigma 4b^2\varepsilon_{\mu\nu}L_*,	\qquad
	\comm{P_\mu}{s} = b\varepsilon_{\mu\nu}\Gamma^\nu s,	\qquad
	\comm{s}{s} = \kappa_s^\mu P_\mu + \varsigma 2b(\overline s s) L_*,	\qquad \varsigma_B=+1,
\]
or
\[
	\comm{P_\mu}{P_\nu} = 4b^2\varepsilon_{\mu\nu}L_*,	\qquad
	\comm{P_\mu}{s} = b\Gamma_\mu s,	\qquad
	\comm{s}{s} = \kappa_s^\mu P_\mu - \varsigma 2b\qty(\overline s\Gamma_*s) L_*	\qquad \varsigma_B=-1.
\]
Note that since $\comm{V}{V}=\RR L_*=\fstilde_0$ if $b\neq 0$, there are no maximally supersymmetric proper subalgebras; the only non-trivial maximally supersymmetric filtered subdeformations are deformations of the whole graded superalgebra $\fs$.

In each case, the even part of $\fstilde$ is the isometry algebra (algebra of Killing vectors) for a maximally symmetric pseudo-Riemannian geometry; we have the isometry algebra of 2-dimensional hyperbolic space with scalar curvature $R=-8b^2$ for either sign $\varsigma_B$ in the Riemannian case, and~$dS_2$ $\bigl(R=8b^2\bigr)$ for $\varsigma_B=+1$ and~$AdS_2$ $\bigl(R=-8b^2\bigr)$ for $\varsigma_B=-1$ in the Lorentzian case. Indeed, each of those geometries is a homogeneous space for the metric Lie pair $\bigl(\fstilde_{\overline 0},\fs_0=\fso(V),\eta\bigr)$, where we note that $V\cong\fstilde_{\overline 0}/\fso(V)$ as an $\fso(V)$-module. Moreover, for $\varsigma=\varsigma_B=-1$, $\fstilde$ is actually the standard anti-de Sitter superalgebra (see \cite{VanProeyen1999}). Upon considering Killing spinors and supersymmetric geometries in 2 dimensions below, we will find that these homogeneous spaces are precisely the maximally supersymmetric geometries, at least up to local isometry.

\section{Admissible connections and Killing superalgebras}
\label{sec:2d-ksa}

Throughout, we let $(M,g)$ be a connected 2-dimensional pseudo-Riemannian spin manifold of either Riemannian\footnote{Recall that we take a $+$ sign in the Clifford relation \eqref{eq:clifford-rel} so that when we work with a Riemannian (positive-definite) metric, the signature is $(0,2)$ for the purposes of Clifford algebra constructions.} or Lorentzian signature; in Lorentzian signature we additionally assume that $(M,g)$ is time-orientable (so that it is strongly spin in the sense of, e.g., \cite{Cortes2021}). Fixing a spin structure $P\to M$, we let \smash{$\Sbundle:=P\times_{\Spin(V)} \ssS$} be the spinor bundle with fibre $\ssS$ and $\fS=\Gamma(\Sbundle)$ be its space of sections. We denote the Levi-Civita connection (and its lift to $\Sbundle$) by $\nabla$. We let $B$ be either of the admissible bilinears on $S$ in each signature; since $B$ is $\fso(V)$-invariant, it induces a $\nabla$-parallel bilinear form $\pair{-}{-}$ on $\Sbundle$. There is then a Dirac current on the spinor bundle $\kappa\colon\Odot^2 \Sbundle\to TM$ defined by the equation
$
	g(\kappa(\epsilon,\epsilon'),X) = \pair{\epsilon}{X\cdot \epsilon'}
$
for all $X\in\fX(M)$, $\epsilon,\epsilon'\in\fS$.

Let $\vol$ denote the canonical volume form on $(M,g)$. Then in a local orthonormal frame, $\vol\cdot\epsilon=\Gamma_*\epsilon$ for all $\epsilon\in\fS$. We note that for $\alpha^{(p)}\in\Omega^p(M)$, we have
\[
	\vol\cdot \alpha^{(0)} = \alpha^{(0)}\vol = *\alpha^{(0)},
	\qquad \vol\cdot\alpha^{(1)} = -*\alpha^{(1)},
	\qquad \vol\cdot\alpha^{(2)} = -*\alpha^{(2)},
\]
where $\cdot$ denotes Clifford multiplication.

\subsection{Admissible connections}

We now consider admissible connections $D$ on the spinor bundle $\Sbundle$ equipped with Dirac current~$\kappa$. We can always write such a connection as $D=\nabla-\beta$ for some unique $\beta\in\Omega^1(M;\Sbundle)$. We recall the following definition \cite[Definition~3.6]{Beckett2024_2}, noting that since we deal with the case of a symmetric Dirac current we have simplified the conditions by polarising them.

\begin{Definition}\label{def:killing-spinor}
	The connection $D=\nabla-\beta$ on $\Sbundle$ is \emph{admissible} if the following hold:
	\begin{enumerate}\itemsep=0pt
		\item The section $\gamma$ of the bundle $\Hom\qty(\Odot^2\Sbundle,\End(TM))$ defined by $\gamma(\epsilon,\epsilon)X := - 2\kappa(\epsilon,\beta(X)\epsilon)$ satisfies $\gamma(\epsilon,\epsilon)\in\fso(M,g)$ for all $\epsilon\in\fS$; 
		\item $\beta(\kappa_\epsilon)\epsilon
				+ \gamma(\epsilon,\epsilon)\cdot\epsilon
				= 0$ for all $\epsilon\in\fS$; 
		\item $\eL_{\kappa_\epsilon}\beta = 0$ for all $\epsilon\in\fS_D$. 
	\end{enumerate}
	If $D$ is admissible, the differential equation $D\epsilon=0$ (equivalently $\nabla\epsilon =\beta\epsilon$) is called the \emph{Killing spinor equation} and
$
		\fS_D = \{\epsilon\in\fS \mid D\epsilon = 0\}
$
	the space of \emph{Killing spinors}. Furthermore,
\[
		\fV_D = \{X\in \fX(M) \mid \eL_Xg =0,\, \eL_X\beta = 0\}
\]
	is the space of \emph{restricted Killing vectors}. The \emph{Killing superalgebra} is the vector superspace $\fK_D$ with $(\fK_D)_{\overline 0}=\fV_D$ and $(\fK_D)_{\overline 1}=\fS_D$ equipped with the Lie superalgebra bracket
$\comm{X}{Y} = \eL_X Y$,
$\comm{X}{\epsilon} = \eL_X \epsilon$,
$\comm{\epsilon}{\epsilon} = \kappa_\epsilon$,
	for $X,Y\in\fV_D$ and $\epsilon\in\fS_D$, where $\eL_X \epsilon=\nabla_X\epsilon - (\nabla X)\cdot\epsilon$ is the spinorial Lie derivative of \cite{Kosmann1971}.
\end{Definition}

The first two conditions in the definition (taken pointwise) are essentially the normalised Spencer cocycle conditions \eqref{eq:spencer-1} and \eqref{eq:spencer-2} in degree $(2,2)$ for the appropriate flat model superalgebra\footnote{Since we concluded that $\cH^{2,2}=0$ for the chiral case $S=\ssS_+$ in signature $(1,1)$, the only admissible connection is $\nabla$ in that case, meaning that Killing spinors are nothing but parallel spinors, hence we only consider the non-chiral case.} $\fs$, the solution to which is given by equations \eqref{eq:2d-real-beta-soln} and \eqref{eq:2d-real-gamma-soln}. In global notation, we have
\begin{equation}\label{eq:2d-real-beta}
	\beta(X) \epsilon = \begin{cases}
		b X\cdot \epsilon	& \text{ for } \varsigma_B = -1,\\
		b (*X)\cdot\epsilon	& \text{ for } \varsigma_B = +1,
	\end{cases}
\end{equation}
where now $b\in C^\infty(M)$. For $\varsigma_B=-1$, the Killing spinor equation is $\nabla_X\epsilon = b X\cdot\epsilon$, so we are working with a generalisation of \emph{geometric} Killing spinors where the Killing number is allowed to be a function, known as the \emph{Killing function}, rather than a constant. This generalisation was considered for complex spinors on Riemannian spin manifolds in \cite{Lichnerowicz1987}, where it was shown in particular that the Killing function must be either real and constant or purely imaginary. That a real Killing function must be constant was already known from \cite{Hijazi1986}, the imaginary case was later studied in detail in \cite{Rademacher1990}. Note that our choice to work with the ``wrong'' sign ($+$) in the Clifford relation \eqref{eq:clifford-rel} effectively exchanges the roles of real and imaginary Killing number,\footnote{We can recover the standard treatment by working with complex Clifford algebras and spinors and replacing $\Gamma_\mu\mapsto i\Gamma_\mu$ which has the same effect on the Killing spinor equation as replacing $b\mapsto ib$.} so that our function $b$ being real-valued means that we are actually working in the ``imaginary'' case which will be verified when we consider the geometries supporting Killing ($D$-parallel) spinors. For $\varsigma_B=+1$, we have another generalisation of geometric Killing spinors known as \emph{skew-Killing spinors} \cite{Habib2012} (again, specifically the ``imaginary'' case).

 It remains to check the third condition from the definition. We will first derive some results using the integrability condition $R^D\epsilon=0$ for the existence of $D$-parallel spinors. We start with the following formula for the curvature of $D$ \cite[equation~(3.2)]{Beckett2024_2}:
\[
	R^D(X,Y)\epsilon = R(X,Y)\cdot\epsilon + \comm{\beta(X)}{\beta(Y)}\epsilon - (\nabla_X\beta)(Y)\epsilon + (\nabla_Y\beta)(X)\epsilon
\]
for all $X,Y\in\fX(M)$, $\epsilon\in\fS$, where $R$ is the Riemann curvature considered as a 2-form with values in skew-symmetric endomorphisms of $TM$.

For $\varsigma_B=-1$, we compute the following commutator:
\[
	\comm{\beta(X)}{\beta(Y)}\epsilon = b^2 (X\cdot Y - Y\cdot X)\cdot \epsilon = 2b^2 (X\wedge Y)\cdot \epsilon,
	\qquad \varsigma_B = -1,
\]
for $X,Y\in\fX(M)$ and $\epsilon\in\fS$, and $(\nabla_X\beta)(Y)=(\nabla_X b) Y\cdot \epsilon$. Our integrability condition is thus
\[
	R(X,Y)\cdot \epsilon + 2b^2 (X\wedge Y)\cdot \epsilon - ((\nabla_X b) Y - (\nabla_Y b) X)\cdot \epsilon = 0,
	\qquad \varsigma_B = -1.
\]
In the $\varsigma_B=+1$ case, we similarly find that
\[
	\comm{\beta(X)}{\beta(Y)}\epsilon = 2b^2 ((*X)\wedge (*Y))\cdot \epsilon = \varsigma 2b^2 (X\wedge Y)\cdot \epsilon,
		\qquad \varsigma_B = +1.
\]
The second equality is justified as follows, using the definition of the Hodge star and denoting the volume form by $\vol$
\[
	(*X)\wedge (*Y) = g(*X,Y)\vol = g(Y,*X)\vol = Y\wedge (**X) = -(**X)\wedge Y = \varsigma X\wedge Y
\]
since we can show that $*^2=-\varsigma \Id$ when acting on 1-vectors. The integrability equation is then
\[
	R(X,Y)\cdot \epsilon + \varsigma 2b^2 (X\wedge Y)\cdot \epsilon - ((\nabla_X b) *Y - (\nabla_Y b) *X)\cdot \epsilon = 0,
	\qquad \varsigma_B = +1.
\]

We use the integrability conditions above to show the following.

\begin{Lemma}\label{lemma:2d-dirac-current-preserve-beta}
	If $\epsilon$ is a spinor field such that $\nabla_X\epsilon=\beta(X)\epsilon$ then $\nabla_{\kappa_\epsilon} b = 0$.
\end{Lemma}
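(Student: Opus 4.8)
The plan is to read off the statement from the integrability condition $R^D\epsilon=0$ established just above, which holds here because the hypothesis $\nabla_X\epsilon=\beta(X)\epsilon$ says exactly that $\epsilon$ is $D$-parallel. Working in a local orthonormal frame and setting $X=e_\mu$, $Y=e_\nu$, I would first exploit the dimensional fact that $\fso(V)=\RR\Gamma_*$ in two dimensions: both the curvature term $R(e_\mu,e_\nu)\cdot\epsilon$ and the commutator term are proportional to $\varepsilon_{\mu\nu}\Gamma_*\epsilon$, so after antisymmetrisation every term except the one carrying $\nabla b$ collapses onto $\Gamma_*\epsilon$. Contracting the condition with $\varepsilon^{\mu\nu}$ then yields an identity of the schematic form $w\cdot\epsilon=C\,\Gamma_*\epsilon$, with $C\in C^\infty(M)$ a scalar whose precise value I will not need, and where the vector $w$ is the gradient $\nabla b$ when $\varsigma_B=+1$ and has components $w^\nu=\varepsilon^{\mu\nu}\nabla_\mu b$ (the Hodge rotation of $\nabla b$) when $\varsigma_B=-1$; the discrepancy reflects the $(*X)\cdot$ versus $X\cdot$ appearing in $\beta$ for the two signs, cf.\ \eqref{eq:2d-real-beta}.

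The second step is to pair this identity with a spinor bilinear so that its left-hand side becomes $\nabla_{\kappa_\epsilon}b$ while its right-hand side vanishes. Since $\kappa_\epsilon^\mu=\overline\epsilon\Gamma^\mu\epsilon$, applying $B(\epsilon,-)$ to $w\cdot\epsilon=w^\mu\Gamma_\mu\epsilon$ produces $g(w,\kappa_\epsilon)$. For $\varsigma_B=+1$ this is exactly $\nabla_{\kappa_\epsilon}b$ because $w=\nabla b$, while on the right $C\,\overline\epsilon\Gamma_*\epsilon=0$ since $\overline\epsilon\Gamma_*\epsilon$ vanishes identically in that case (as recorded in the proof of Corollary~\ref{coro:2d-dirac-causality}); hence $\nabla_{\kappa_\epsilon}b=0$.

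For $\varsigma_B=-1$ this direct pairing fails: there $\overline\epsilon\Gamma_*\epsilon\neq0$ in general, and $B(\epsilon,-)$ would return $\nabla_{*\kappa_\epsilon}b$ rather than $\nabla_{\kappa_\epsilon}b$. The remedy is to pair instead with $B(\epsilon,\Gamma_*\,-)$. Using $\Gamma_*\Gamma_\nu=-\varsigma\,\varepsilon\indices{_\nu^\rho}\Gamma_\rho$ from Section~\ref{sec:gamma-matrix}, the inserted $\Gamma_*$ undoes the Hodge rotation relating $w$ to $\nabla b$ and, after the $\varepsilon$-identities, produces precisely $\nabla_{\kappa_\epsilon}b$ on the left; on the right $(\Gamma_*)^2=-\varsigma\1$ turns the term into a multiple of $\overline\epsilon\epsilon$, which vanishes identically for $\varsigma_B=-1$. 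This again gives $\nabla_{\kappa_\epsilon}b=0$, and notably with no case distinction on the causal type of $\kappa_\epsilon$.

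The step I expect to require the most care is this sign-dependent choice of pairing. The heart of the argument is that the residual $\Gamma_*\epsilon$ term must be annihilated by whichever of the two admissible scalars $\overline\epsilon\epsilon$, $\overline\epsilon\Gamma_*\epsilon$ vanishes, and which one vanishes is exactly the invariant distinguishing $\varsigma_B=+1$ from $\varsigma_B=-1$; inserting the extra $\Gamma_*$ in the latter case is what simultaneously corrects the left-hand side and activates the complementary vanishing bilinear. Everything else reduces to routine $\Gamma$-matrix and Levi-Civita-symbol manipulations, and the argument is signature-agnostic because $\varsigma$ enters only through those identities.
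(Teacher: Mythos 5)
Your proof is correct and takes essentially the same route as the paper's: both start from $R^D\epsilon=0$, note that the curvature and $b^2$ terms are proportional to $\vol\cdot\epsilon$, and annihilate them with exactly the same sign-dependent pairing — with $\epsilon$ when $\varsigma_B=+1$ (using $\overline\epsilon\Gamma_*\epsilon=0$) and with $\vol\cdot\epsilon$, i.e., inserting $\Gamma_*$, when $\varsigma_B=-1$ (using $\overline\epsilon\epsilon=0$) — leaving only the $db$ terms, which give $g\qty(\nabla b,\kappa_\epsilon)=0$. The only cosmetic difference is that you contract with $\varepsilon^{\mu\nu}$ in a local frame and finish by direct index manipulation, whereas the paper keeps arbitrary $X$, $Y$ and concludes via a Hodge-star/interior-product Leibniz argument.
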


\begin{proof}
	First, note that $R(X,Y)\cdot\epsilon$ and $(X\wedge Y)\cdot\epsilon$ are proportional to $\vol\cdot\epsilon$, so in the $\varsigma_B=+1$ case, pairing the integrability condition with $\epsilon$ and using the fact that $\pair{\epsilon}{\vol\cdot\epsilon} = 0$ gives us
	\[
		(\nabla_X b)g(*Y,\kappa_\epsilon) - (\nabla_Y b)g(*X,\kappa_\epsilon) = 0
	\]
	for arbitrary $X,Y\in\fX(M)$. In the $\varsigma_B=-1$ case, we get the same equation by pairing with~${\vol\cdot\epsilon}$ and using $\pair{\epsilon}{\epsilon}=0$ along with $\vol\cdot\vol = -\varsigma\1$ and $\vol\cdot X = -*X$. Then, using the definition of the Hodge star operator and $\nabla_X b = \imath_{db}X$, the equation above is equivalent to~${\imath_{db}(X\wedge Y)\wedge\kappa_\epsilon = 0}$,
	which using a Leibniz rule is equivalent to $\imath_{\kappa_\epsilon}db(X\wedge Y) = 0$. Finally, since $X$, $Y$ are arbitrary, this gives us $\nabla_{\kappa_\epsilon}b=\imath_{\kappa_\epsilon}db=0$.
\end{proof}

\begin{Proposition}\label{prop:2d-real-KSA-exist}
	Let $(M,g)$ be a $2$-dimensional $($strongly$)$ spin manifold with signature $(0,2)$ or $(1,1)$, let $B$ be an admissible bilinear on the irreducible pinor module $S=\ssS$ $($of which there are two, distinguished by $\varsigma_B=\pm 1$$)$ and $\kappa$ the corresponding Dirac current. Let $\Sbundle$ denote the spinor bundle associated to $S$. Then if we define $\beta\in\Omega^1(M;\End\Sbundle)$ by equation \eqref{eq:2d-real-beta}, $D=\nabla-\beta$ is an admissible connection.
\end{Proposition}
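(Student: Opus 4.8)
The plan is to verify the three conditions of Definition~\ref{def:killing-spinor} for the connection $D=\nabla-\beta$ with $\beta$ given by \eqref{eq:2d-real-beta}. Conditions~(1) and~(2) are purely algebraic: they involve $\beta$ and the induced $\gamma$ fibrewise, with no derivatives of $\beta$, so I would dispatch them by observing that they coincide exactly with the normalised Spencer cocycle conditions \eqref{eq:spencer-1} and \eqref{eq:spencer-2} evaluated at each point. Indeed, in a local orthonormal frame the global $\beta$ of \eqref{eq:2d-real-beta} is precisely the algebraic solution \eqref{eq:2d-real-beta-soln}, and the $\gamma$ determined by $\gamma(\epsilon,\epsilon)X:=-2\kappa(\epsilon,\beta(X)\epsilon)$ is then \eqref{eq:2d-real-gamma-soln}. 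Condition~(1), that $\gamma(\epsilon,\epsilon)\in\fso(M,g)$, is exactly the constraint \eqref{eq:cocycle-1-constr} already verified in the proof of Proposition~\ref{prop:2d-real-spencer-soln}, while condition~(2) is the Fierz computation carried out there. The only new feature is that $b$ is now a function rather than a constant, but since neither condition differentiates $\beta$ this is irrelevant and both hold pointwise.

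The substantive work is condition~(3), namely $\eL_{\kappa_\epsilon}\beta=0$ for every Killing spinor $\epsilon\in\fS_D$. First I would show that the Dirac current $\kappa_\epsilon$ of such a spinor is a genuine Killing vector. Using the Killing spinor equation $\nabla_X\epsilon=\beta(X)\epsilon$, the $\nabla$-parallelism of $B$, and the type property $\overline{\Gamma_\mu\epsilon}=\tau_B\overline\epsilon\Gamma_\mu$, the computation of $\nabla_\mu(\kappa_\epsilon)_\nu=(\nabla_\mu\overline\epsilon)\Gamma_\nu\epsilon+\overline\epsilon\Gamma_\nu\nabla_\mu\epsilon$ collapses via the Clifford relations to $\gamma(\epsilon,\epsilon)_{\mu\nu}$ in either signature. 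Since $\gamma(\epsilon,\epsilon)\in\fso(M,g)$ by condition~(1), this is skew, so $\nabla_{(\mu}(\kappa_\epsilon)_{\nu)}=0$ and hence $\eL_{\kappa_\epsilon}g=0$.

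With $\kappa_\epsilon$ Killing in hand, I would finish by writing $\beta=b\,\beta_0$, where $\beta_0$ is the endomorphism-valued one-form built entirely from metric data: Clifford multiplication by $X$ for $\varsigma_B=-1$, or by $*X$ for $\varsigma_B=+1$. The Leibniz rule then gives $\eL_{\kappa_\epsilon}\beta=(\kappa_\epsilon b)\beta_0+b\,\eL_{\kappa_\epsilon}\beta_0$. The coefficient $\kappa_\epsilon b=\nabla_{\kappa_\epsilon}b$ vanishes by Lemma~\ref{lemma:2d-dirac-current-preserve-beta}, and $\eL_{\kappa_\epsilon}\beta_0=0$ because the Kosmann spinorial Lie derivative along a Killing field is a derivation over Clifford multiplication and annihilates the metric volume form, hence commutes with the Hodge star. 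I expect this last step --- making precise that $\eL_{\kappa_\epsilon}$ preserves the purely metric part $\beta_0$ --- to be the point requiring the most care, as it rests on the compatibility of the Kosmann derivative with the Clifford action along an isometry; everything else reduces either to the algebra already performed for the Spencer cohomology or to the integrability Lemma.
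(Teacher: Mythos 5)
Your proposal is correct and follows essentially the same route as the paper's proof: conditions (1) and (2) of Definition~\ref{def:killing-spinor} are dispatched as the pointwise Spencer cocycle conditions already solved in \eqref{eq:2d-real-beta-soln} and \eqref{eq:2d-real-gamma-soln}, and condition (3) is reduced, via the Leibniz/derivation property of the Kosmann Lie derivative over Clifford multiplication (equivalently your factorisation $\beta = b\,\beta_0$ with $\eL_{\kappa_\epsilon}\beta_0=0$), to the vanishing $\eL_{\kappa_\epsilon}b=\nabla_{\kappa_\epsilon}b=0$ supplied by Lemma~\ref{lemma:2d-dirac-current-preserve-beta}. Your explicit check that $\kappa_\epsilon$ is a Killing vector (via skewness of $\nabla\kappa_\epsilon=\gamma(\epsilon,\epsilon)$) is a detail the paper leaves implicit, relying on the general theory of the companion work, but it does not change the structure of the argument.
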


\begin{proof}
	As already noted, conditions (1) and (2) of Definition~\ref{def:killing-spinor} are uniquely satisfied by equation \eqref{eq:2d-real-beta}. It remains to show condition~(3), namely that $\eL_{\kappa_\epsilon}\beta=0$ for all $\epsilon\in\fS_D$. For $\varsigma_B=-1$, we have
	\begin{align*}
		(\eL_{\kappa_\epsilon}\beta)(X)\zeta
			&= \eL_{\kappa_\epsilon}(\beta(X)\zeta) - \beta(\eL_{\kappa_\epsilon}X)\zeta - \beta(X)(\eL_{\kappa_\epsilon}\zeta)	\\
			&= \eL_{\kappa_\epsilon}(bX\cdot\zeta) - b(\eL_{\kappa_\epsilon}X)\cdot\zeta - bX\cdot (\eL_{\kappa_\epsilon}\zeta)	= (\eL_{\kappa_\epsilon}b)X\cdot\zeta	
	\end{align*}
	for all $X\in\fX(M)$ and $\epsilon,\zeta\in\fS$, where we have used the Leibniz rule twice. Similarly, for $\varsigma_B=+1$ we have $(\eL_{\kappa_\epsilon}\beta)(X)\zeta= (\eL_{\kappa_\epsilon}b)(*X)\cdot\zeta$. But then if $\epsilon\in\fS_D$, by Lemma~\ref{lemma:2d-dirac-current-preserve-beta}, we have $\eL_{\kappa_\epsilon}b=\nabla_{\kappa_\epsilon}b=0$, whence $\eL_{\kappa_\epsilon}\beta=0$ as required.
\end{proof}

Thus we have determined the precise form of the admissible connections with respect to the two different choices of Dirac currents. Since both maps are symmetric, these connections give rise to Killing \emph{super}algebras.

\subsection{Further insights from integrability}

We can obtain yet further results on the geometries which support Killing superalgebras using the integrability equations. If we have some non-zero $\epsilon\in\fS_D$, then since $R^D(X,Y)\epsilon=0$ the determinant of $R^D(X,Y)$ (as a spinor endomorphism) must vanish everywhere. Contracting the curvature with a Levi-Civita symbol for convenience, we have the following in a local frame:
\begin{equation}\label{eq:2d-D-curvature-comps}
	\varepsilon^{\mu\nu}R^D_{\mu\nu} =
		\begin{cases}
			\qty(\frac{1}{4}\varepsilon^{\mu\nu}\varepsilon^{\sigma\tau}R_{\mu\nu\sigma\tau} + 2b^2\varepsilon^{\mu\nu}\varepsilon_{\mu\nu})\Gamma_* - 2\varepsilon^{\mu\nu}\nabla_\mu b\Gamma_\nu
			\\
			\qquad=\varsigma\qty(\frac{1}{2}R + 4b^2)\Gamma_* - 2\varepsilon^{\mu\nu}\nabla_\mu b\Gamma_\nu
				& \text{for } \varsigma_B=-1,
			\\	
			\qty(\frac{1}{4}\varepsilon^{\mu\nu}\varepsilon^{\sigma\tau}R_{\mu\nu\sigma\tau} + \varsigma 2b^2\varepsilon^{\mu\nu}\varepsilon_{\mu\nu})\Gamma_* + \varsigma2\nabla^\mu b\Gamma_\mu
			\\
			\qquad= \qty(\varsigma\frac{1}{2}R + 4b^2)\Gamma_* + \varsigma 2\nabla^\mu b\Gamma_\mu
				& \text{for } \varsigma_B=+1,
		\end{cases}
\end{equation}
where in the second line of each case we have used the well-known fact that the Riemann tensor in 2 dimensions can be expressed in terms of the scalar curvature $R$
\[
	R_{\mu\nu\sigma\tau}=\tfrac{1}{2}R\qty(\eta_{\mu\sigma}\eta_{\nu\tau}-\eta_{\mu\sigma}\eta_{\nu\tau}) =\varsigma\tfrac{1}{2}R\varepsilon_{\mu\nu}\varepsilon_{\sigma\tau},
\]
and a combinatorial identity for $\varepsilon_{\mu\nu}$. Recalling our explicit descriptions of the Clifford algebras~$\Cl(0,2)$ and $\Cl(1,1)$ in terms of the Pauli matrices, we can compute the determinant in the local frame using the formula
$\det\qty(a\1 + b\sigma_1 + c\sigma_2 + d\sigma_3) = a^2 - b^2 - c^2 - d^2
$
for $a,b,c,d\in\CC$, giving us (we multiply by a sign for a slight simplification)
\begin{equation}\label{eq:2d-real-curv-det}
	\varsigma\det(\varepsilon^{\mu\nu}R^D_{\mu\nu}) =
		\begin{cases}
			\qty(\frac{1}{2}R + 4b^2)^2 - 4\norm{db}^2
				& \text{for } \varsigma_B=-1,
			\\	
			\qty(\frac{1}{2}R + \varsigma 4b^2)^2 - \varsigma 4\norm{db}^2
				& \text{for } \varsigma_B=+1.
		\end{cases}
\end{equation}

We would now like to set the expression above to zero and examine the resulting equations in the scalar curvature $R$ and the Killing number $b$. Let us first note that in Riemannian signature, the norm on differential forms is positive-definite, thus $\norm{db}^2\geq 0$ with equality if and only if $b$ is constant. A similar result holds in the Lorentzian case, but the statement is non-trivial since the norm is not positive-definite in this case.

\begin{Lemma}
	In Lorentzian signature, if $\fS_D\neq 0$, then $\varsigma_B\norm{db}^2\leq 0$, and $db$ is null at a point $p\in M$ if and only if $p$ is a critical point for $b$ or $\fS_D$ is spanned by a Killing spinor which is chiral at $p$.
\end{Lemma}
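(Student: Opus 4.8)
The plan is to exploit the fact that the existence of a $D$-parallel spinor forces the contracted curvature endomorphism $Q := \varepsilon^{\mu\nu}R^D_{\mu\nu}$ to be everywhere singular, and then to read off both the sign of $\norm{db}^2$ and the dichotomy (critical point versus chiral spinor) from the pointwise algebraic structure of $Q$ via the determinant formula \eqref{eq:2d-real-curv-det}.

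First I would note that a $D$-parallel section of $\Sbundle$ which is nonzero at one point is nowhere-vanishing, by uniqueness for the parallel-transport ODE; hence $\fS_D\neq 0$ provides an $\epsilon$ with $\epsilon(p)\neq 0$ and $Q|_p\,\epsilon(p)=0$ at every $p$, so $\det Q\equiv 0$. Substituting $\varsigma=-1$ into \eqref{eq:2d-real-curv-det} and setting the result to zero expresses $\norm{db}^2$ as $\pm\tfrac14$ times the square of the $\Gamma_*$-coefficient of $Q$: for $\varsigma_B=-1$ one finds $\norm{db}^2=\tfrac14\qty(\tfrac12 R+4b^2)^2\geq 0$ and for $\varsigma_B=+1$ one finds $\norm{db}^2=-\tfrac14\qty(\tfrac12 R-4b^2)^2\leq 0$. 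In either case $\varsigma_B\norm{db}^2\leq 0$, proving the first claim.

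For the equivalence, the observation is that $db$ is null at $p$ precisely when $\norm{db}^2|_p=0$, and by the relations just displayed this holds if and only if the $\Gamma_*$-coefficient of $Q$ vanishes at $p$; thus $db$ null at $p$ means $Q|_p$ reduces to its vector part $\xi^\nu\Gamma_\nu$, with $\xi$ proportional to $*db$ (for $\varsigma_B=-1$) or to $db$ (for $\varsigma_B=+1$). I would then split into cases. If $db=0$ at $p$, then $p$ is a critical point of $b$. If $db\neq 0$ at $p$, then $\xi$ is a nonzero null vector, so by the Clifford relation $\xi\cdot\xi=\eta(\xi,\xi)\1=0$ the endomorphism $\xi^\nu\Gamma_\nu$ is nilpotent of rank one; a direct check in the explicit representation of Section~\ref{sec:2d-dirac-currents} identifies its one-dimensional kernel with a chiral subspace $\ssS_\pm$. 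Since the evaluation map $\fS_D\to\Sbundle_p$ is injective and lands in $\ker Q|_p$, this forces $\fS_D$ to be one-dimensional and spanned by a spinor chiral at $p$.

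For the converse, a critical point trivially has $db$ null. If instead $\fS_D$ is spanned by $\epsilon$ with $\Gamma_*\epsilon=\pm\epsilon$ at $p$, I would feed this into $Q|_p\epsilon=0$: writing $Q|_p=c\,\Gamma_*+\xi^\nu\Gamma_\nu$, chirality gives $\xi^\nu\Gamma_\nu\epsilon=\mp c\,\epsilon$, and applying $\xi^\rho\Gamma_\rho$ once more (with $\acomm{\Gamma_\nu}{\Gamma_\rho}=2\eta_{\nu\rho}$) yields $\norm{\xi}^2=c^2$. On the other hand, the explicit form of $\xi$ together with $\det Q=0$ gives $\norm{\xi}^2=-c^2$ in Lorentzian signature, the minus sign being exactly the effect of $\varsigma=-1$ passing through the Hodge dual; combining the two relations forces $c^2=\norm{\xi}^2=0$, hence $\norm{db}^2=0$ and $db$ is null at $p$. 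The one step needing genuine care is the identification of the kernel of Clifford multiplication by a nonzero null vector with a chiral subspace, since this is precisely what translates the analytic statement \enquote{$db$ null} into the algebraic statement \enquote{$\epsilon$ chiral}; the sign mismatch $\norm{\xi}^2=-c^2$ (the Lorentzian feature) is what then pins $\norm{db}^2$ to zero rather than merely relating it to the curvature.
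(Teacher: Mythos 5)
Your proof is correct, but it takes a genuinely different route from the paper's. The paper does not use the determinant formula \eqref{eq:2d-real-curv-det} to prove this lemma at all: it combines Lemma~\ref{lemma:2d-dirac-current-preserve-beta} (that $(db)^\sharp$ is orthogonal to $\kappa_\epsilon$ for every Killing spinor $\epsilon$) with Corollary~\ref{coro:2d-dirac-causality} (that in Lorentzian signature $\kappa_\epsilon$ is everywhere timelike or null, and null exactly where $\epsilon$ is chiral), and then invokes elementary two-dimensional Lorentzian linear algebra -- a covector orthogonal to a timelike vector is spacelike or zero, while one orthogonal to a null vector is collinear with it -- with the dichotomy at a saturation point settled by noting that two independent Killing spinors would admit non-chiral linear combinations. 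You instead run the logic of the surrounding section backwards: you take the everywhere-vanishing of $\det\qty(\varepsilon^{\mu\nu}R^D_{\mu\nu})$ (which the paper exploits only \emph{after} the lemma) as the starting point, read the sign of $\norm{db}^2$ off the resulting perfect-square identity, and extract the chirality statement from the pointwise algebra of $Q=c\,\Gamma_*+\xi^\nu\Gamma_\nu$, the key algebraic fact being that Clifford multiplication by a non-zero null vector is nilpotent with a chiral line as kernel; the converse is handled by playing $\norm{\xi}^2=c^2$ (from $Q\epsilon=0$ and chirality) against $\norm{\xi}^2=-c^2$ (from $\det Q=0$). Both arguments are sound, and I checked your sign conventions: $\norm{\xi}^2=\varsigma\,4\norm{db}^2$ for $\varsigma_B=-1$ and $\norm{\xi}^2=4\norm{db}^2$ for $\varsigma_B=+1$, together with $\det Q=-\qty(c^2+\norm{\xi}^2)$ in Lorentzian signature, do give exactly the relations you state. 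What your approach buys: it is self-contained given \eqref{eq:2d-D-curvature-comps}--\eqref{eq:2d-real-curv-det}, needs neither Lemma~\ref{lemma:2d-dirac-current-preserve-beta} nor Corollary~\ref{coro:2d-dirac-causality}, and yields the quantitative identities $\norm{db}^2=\tfrac14\qty(\tfrac12 R+4b^2)^2$ (resp.\ $\norm{db}^2=-\tfrac14\qty(\tfrac12 R-4b^2)^2$) as a by-product, which is precisely the content of equation \eqref{eq:2d-scalar-curv-formula} that the paper derives separately afterwards. What the paper's approach buys: it is representation-free and more geometric, exposing the causal mechanism (the Dirac current of a Killing spinor constrains $db$) rather than a matrix computation, and it generalises more readily to settings where one has causality information about the Dirac current but no closed-form curvature determinant.
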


\begin{proof}
	We will prove the $\varsigma_B=-1$ case; the $\varsigma_B=+1$ case is entirely analogous. By Lemma~\ref{lemma:2d-dirac-current-preserve-beta}, $(db)^\sharp$ is orthogonal to $\kappa_\epsilon$ for all $\epsilon\in\fS_D$. By Corollary~\ref{coro:2d-dirac-causality}, for $\varsigma_B=-1$, $\kappa_\epsilon$ is everywhere either timelike or null, and it is null if and only if $\epsilon$ is chiral. Thus where $\kappa_\epsilon$ is timelike, $db$ must be either spacelike or zero, and where $\kappa_\epsilon$ is null, $db$ must be collinear with $\kappa_\epsilon$. This proves that~$\norm{db}^2\geq 0$. At a point $p$ where the inequality is saturated, we must have either $(db)_p=0$, whence $p$ is a critical point, or $(db)_p\neq 0$ is null and $\epsilon$ is chiral at $p$. Thus all Killing spinors must be chiral at $p$. But if there are two independent Killing spinors, there are linear combinations of such spinors which are not chiral at $p$, a contradiction.
\end{proof}

In particular, when $\fS_D\neq 0$, we may rearrange the equations obtained by setting the determinant in \eqref{eq:2d-real-curv-det} to zero to find
\begin{equation}\label{eq:2d-scalar-curv-formula}
	R =
		\begin{cases}
			\pm 4\abs{db} - 8b^2	& \text{for } \varsigma_B=-1,	\\
			\pm 4\abs{db} - \varsigma 8b^2		& \text{for } \varsigma_B=+1,
		\end{cases}
\end{equation}
where \smash{$\abs{db}=\sqrt{|\norm{db}^2|}$}.

Constraints of the type we have just derived are well-known in the literature on geometric Killing spinors and their generalisations, but we have arrived at this expression in a slightly non-standard way. The usual method is to derive the integrability condition from the \emph{Lichnerowicz formula} \cite{Hijazi2001,Lichnerowicz1963}
$\mp\slashed\nabla^2\epsilon + \Delta\epsilon = \frac{1}{4}R\epsilon$,
where $\slashed\nabla$ is the Dirac operator (locally $\Gamma^\mu\nabla_\mu$), $\Delta$ the Laplace operator (locally $g^{\mu\nu}\nabla_\mu\nabla_\nu$) $R$ the scalar curvature, and the sign on the first term is the opposite to that in the Clifford relation~\eqref{eq:clifford-rel}, and this identity holds for all $\epsilon\in\fS$. Indeed, in two dimensions it is completely equivalent to $R^D\epsilon=0$.

Finally, we recall that for $\varsigma_B=-1$, Killing spinors are nothing but (generalised) geometric Killing spinors with imaginary Killing function; indeed, for the Riemannian ($\varsigma=+1$) case, a~similar equation to \eqref{eq:2d-scalar-curv-formula} is found in \cite[equation~(5)]{Rademacher1990}, though there it is derived by different means. That work also classifies the possible Riemannian geometries supporting such spinors. In the more well-known case where $b$ is constant, we find that $R=-8b^2$, whence the geometry is (at least locally) hyperbolic space $H^2$.\footnote{If we had worked with a negative-definite metric and the ``correct'' sign in the Clifford algebra, we would have found $R=2\lambda^2$. However, in negative-definite signature this is the correct sign for the curvature of $H^2$ when~${b\in \RR}$, consistent with the interpretation which arises using our original sign convention.}

\subsection{Maximally supersymmetric case}

We call $(M,g,D)$ \emph{maximally supersymmetric} if $\dim\fS_D=\dim S=2$. In this case, the values of $D$-parallel spinors span the fibre of $\Sbundle$ at every point of $M$, and since $R^D$ annihilates these values, we must have $R^D=0$ identically, and we can use this condition to identify $(M,g)$ up to local isometry; let us therefore assume that $M$ is simply connected. From our local expression~\eqref{eq:2d-D-curvature-comps}, we see that $b$ must be constant and we have $R=-8b^2 $ (for $\varsigma_B=-1$) or~${R=-\varsigma 8b^2}$ (for~${\varsigma_B=+1}$). Thus the maximally supersymmetric $\varsigma_B=-1$ case is precisely the classic geometric Killing spinor regime with imaginary Killing number as discussed above. For real $b$, the scalar curvature is constant and negative, so the geometry must be hyperbolic in Riemannian signature and anti-de Sitter in Lorentzian signature. For $\varsigma_B=+1$, the maximally supersymmetric Riemannian geometry is hyperbolic again, while the maximally supersymmetric Lorentzian geometry is de Sitter. This agrees with the results of Section~\ref{sec:2d-max-susy-subdef}, where the same geometries were identified as homogeneous spaces for the even parts of filtered deformations of $\fs$. Indeed, those deformations are precisely the Killing superalgebras of these maximally supersymmetric backgrounds.\looseness=1

\section{Summary of results}

The results of the calculations presented in Sections \ref{sec:2d-spencer} and \ref{sec:2d-ksa} are summarised in Table~\ref{table:2d-spencer}.

\begin{table}[ht]
\centering
	\caption{Summary of results, including the Spencer $(2,2)$-cohomology group, a characterisation of the Killing spinors for the admissible connection and the (non-trivial) maximally supersymmetric geometries.}
	\label{table:2d-spencer}
\vspace{1mm}

	\setlength{\extrarowheight}{1.2ex}
	$\begin{array}{c|c|c|c|c|c}
		\text{Signature}	& S		& B		& \cH^{2,2}	& \text{Killing spinors}	& \text{Max. SUSY geom.}	\\
		\hline
		\multirow{2}{*}{(0,2)}	
		& \ssS_1=\ssS	& \varsigma_B = +		& \RR	& \nabla_X\epsilon=b(*X)\cdot \epsilon	& H^2	\\
		& \ssS_1=\ssS 	& \varsigma_B = -		& \RR	& \nabla_X\epsilon=bX\cdot \epsilon		& H^2	\\
		\hline
		\multirow{3}{*}{(1,1)}	
		& \ssS=\ssS_+\oplus\ssS_-	& \varsigma_B = +	& \RR	& \nabla_X\epsilon=b(*X)\cdot \epsilon	& dS_2	\\
		& \ssS=\ssS_+\oplus\ssS_-	& \varsigma_B = -	& \RR	& \nabla_X\epsilon=bX\cdot \epsilon		& AdS_2	\\
		& \ssS_\pm					& -				& 0		& \nabla_X\epsilon=0					& -	
	\end{array}$
\end{table}

We note that we have treated only signatures $(0,2)$ and $(1,1)$ but considered all independent admissible Dirac currents on the pinor module $\ssS$, of which there are two in each signature distinguished by the symmetry $\varsigma_B$ of the associated admissible bilinear (constructed in Section~\ref{sec:2d-dirac-currents}). We also considered only the minimal ($S=\ssS$) case in signature $(0,2)$ and minimal chiral ($S=\ssS_+$) and non-chiral ($S=\ssS=\ssS_+\oplus\ssS_-$) cases in signature $(1,1)$.

The chiral case was not explicitly discussed in Section~\ref{sec:2d-max-susy-subdef} or Section~\ref{sec:2d-ksa} but is included in the table; it essentially reduces to the non-chiral case (for either choice of $B$) with $b=0$ and a space of Killing spinors (or odd subspace of the superalgebra) which is the one-dimensional span of a~chiral spinor. The bilinear $B$ is trivial when restricted to this subspace, but the Dirac current~$\kappa$ is not. The Killing spinors are parallel spinors (with their currents being parallel null vectors), and the notion of ``maximal supersymmetry'' is somewhat vacuous, with the possible Killing superalgebras in this case being nothing but graded subalgebras of the 2-dimensional minimal chiral Poincar\'e superalgebra.

The $(2,0)$ case, omitted here so as not to complicate the presentation with the need to discuss quaternionic (symplectic Majorana) spinors -- as well as $N$-extended supersymmetry -- will be treated in future work.

\subsection*{Acknowledgements}

The author would like to thank Jos\'e Figueroa-O'Farrill, under whose supervision this work was done, for his guidance and patience. Thanks also to Andrea Santi, James Lucietti, C.S.~Shahbazi and the sorely missed Paul de Medeiros for many enlightening conversations and helpful comments. Finally, thanks to the anonymous referees, whose comments and suggestions greatly improved the quality and clarity of this work.

This work was carried out with scholarship funding from the Science and Technologies Facilities Council (STFC) and the School of Mathematics at the University of Edinburgh.

This work previously appeared as part of the author's Ph.D.~Thesis \cite{Beckett2024} but has not been published elsewhere. Some changes in notation and terminology have been made for this version to aid comparison with other work, and there are also some minor changes in exposition and corrections.

\pdfbookmark[1]{References}{ref}
\LastPageEnding

\end{document}